\newtheorem{theorem}{Theorem}
\numberwithin{theorem}{section}
\newtheorem{maintheorem}{Theorem}
\newtheorem{lemma}[theorem]{Lemma}
\newtheorem{claim}[theorem]{Claim}
\newtheorem{proposition}[theorem]{Proposition}
\newtheorem{corollary}[theorem]{Corollary}
\newtheorem{question}[theorem]{Question}
\theoremstyle{remark}
\newtheorem{rmk}[theorem]{Remark}
\theoremstyle{definition}
\newtheorem{definition}[theorem]{Definition}
\newcommand{\C}{\mathbb{C}}
\newcommand{\Z}{\mathbb{Z}}
\newcommand{\Q}{\mathbb{Q}}
\newcommand{\D}{\mathbb{D}}
\newcommand{\PP}{\mathbb{P}}
\newcommand{\Aut}{\text{Aut}}
\newcommand{\Mod}{\text{Mod}}
\newcommand{\id}{\text{id}}
\newcommand{\Sp}{\text{Sp}}
\newcommand{\U}{\text{U}}
\newcommand{\im}{\text{im}}
\newcommand{\overl}{\overline}
\newcommand{\Alb}{\text{Alb}}
\title{On Kodaira fibrations with invariant cohomology}
\author{Corey Bregman}
\begin{document}
\begin{abstract} A Kodaira fibration is a compact, complex surface admitting a holomorphic submersion onto a complex curve, such that the fibers have nonconstant moduli. We consider Kodaira fibrations $X$ with nontrivial invariant $\Q$-cohomology in degree 1, proving that if the dimension of the holomorphic invariants is 1 or 2, then $X$ admits a branch covering over a product of curves inducing an isomorphism on rational cohomology in degree 1. We also study the class of Kodaira fibrations possessing a holomorphic section, and demonstrate that having a section imposes no restriction on possible monodromies.

\end{abstract}
\maketitle
\section{Introduction}
A \textit{Kodaira fibration} is a holomorphic submersion $f:X\rightarrow C$, from a complex surface $X$ onto a complex curve $C$, whose fibers are diffeomorphic to the orientable surface $\Sigma_h$ of genus $h$, but have nonconstant moduli. As a moduli problem, the data of a Kodaira fibration is the same as a nonconstant holomorphic map from a curve $C$ into the moduli space $\mathcal{M}_h$ of curves of genus $h$.   Thus, the problem of constructing Kodaira fibrations is the same as that of finding complete curves in moduli space.  More generally, as $\mathcal{M}_h$ is the classifying space for all surface bundles with genus $h$ fibers; that is, such bundles are classified up to homotopy equivalence by continuous maps of surfaces into $\mathcal{M}_h$, up to homotopy.  In this sense, Kodaira fibrations provide explicit examples of such extensions, equipped with a natural geometry.  

The topological properties of a Kodaira fibrations are determined entirely by the monodromy homomorphism $\widehat{\rho}:\pi_1(C)\rightarrow \Mod(\Sigma_h)$, where $\Mod(\Sigma_h)$ is the genus $h$ mapping class group. In what follows, we will be concerned chiefly with the corresponding representation on degree 1 cohomology, denoted $\rho:\pi_1(C)\rightarrow \Aut(H^1(\Sigma_h;\Z))\cong\Sp_{2h}(\Z)$.  We study Kodaira fibrations in which $\rho$ has nontrivial invariants. Let the holomorphic part of the invariant cohomology in degree 1 be denoted $H^{1,0}(\Sigma_h)^\rho$. We prove:

\begin{maintheorem}\label{main1}Let $f:X\rightarrow C$ be a Kodaira fibration with $\dim_\C H^{1,0}(\Sigma_h)^\rho=d\leq 2$.  Then there exists a curve $D$ of genus $d$ and a ramified covering $F:X\rightarrow D\times C$ such that $F^*:H^1(D\times C;\Q)\rightarrow H^1(X;\Q)$ is an isomorphism.
\end{maintheorem}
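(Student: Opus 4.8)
The plan is to combine the Leray spectral sequence of $f$ with the Castelnuovo--de Franchis theorem, reducing the construction of $D$ to the existence of two holomorphic $1$-forms on $X$ that restrict to the invariant cohomology and wedge to zero. First I would record the cohomological structure. Since $f$ is a smooth projective morphism, its Leray spectral sequence degenerates at $E_2$ and is compatible with Hodge structures, giving $H^1(X;\Q)\cong H^1(C;\Q)\oplus H^0(C;R^1f_*\Q)$, where $H^0(C;R^1f_*\Q)=H^1(\Sigma_h;\Q)^\rho=:V$ is the invariant cohomology. By semisimplicity $V$ is a polarizable weight-$1$ Hodge structure with $V^{1,0}=H^{1,0}(\Sigma_h)^\rho=:W$, so $\dim_\C W=d$ and $\dim_\Q V=2d$; concretely the classes of $W$ are represented by invariant holomorphic $1$-forms on $X$, well defined modulo $f^*H^{1,0}(C)$. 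Passing to the Albanese and dualizing the inclusion $V\hookrightarrow H^1(X;\Q)$ yields an abelian variety $A$ of dimension $d$ with $H^1(A;\Q)\cong V$ together with a morphism $a\colon X\to A$ inducing this inclusion.

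When $d=1$, $A$ is an elliptic curve, so I would set $D=A$ and $F=(a,f)\colon X\to D\times C$. By K\"unneth $F^*$ is the sum of the isomorphism $a^*\colon H^1(D;\Q)\xrightarrow{\sim}V$ and the identification of $f^*H^1(C;\Q)$ with the complementary summand, hence an isomorphism; moreover $F$ is generically finite, since the pullback under $a$ of a holomorphic $1$-form on $D$ is nonzero on the fibers of $f$ while $f^*$ of a $1$-form on $C$ vanishes there, so their wedge is a nonzero holomorphic $2$-form. Thus $F$ is a ramified covering.

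When $d=2$ the target $A$ is a surface rather than a curve, and the crux is to replace it by a genus-$2$ curve. The key step is to produce $\omega_1,\omega_2\in H^{1,0}(X)$ projecting to a basis of $W$ with $\omega_1\wedge\omega_2=0$; granting this, Castelnuovo--de Franchis produces a fibration $q\colon X\to D$ onto a smooth curve of genus $\geq 2$ (with connected fibers, after Stein factorization) such that $\omega_1,\omega_2\in q^*H^{1,0}(D)$. Since $q^*H^{1,0}(D)$ injects into $W\cong H^{1,0}(X)/f^*H^{1,0}(C)$, which is $2$-dimensional, the genus of $D$ is exactly $2$ and $q^*H^1(D;\Q)=V$. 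I would then take $F=(q,f)$ and argue exactly as in the previous paragraph that $F^*$ is an isomorphism and $F$ is a ramified covering.

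The main obstacle is the vanishing $\omega_1\wedge\omega_2=0$ for a suitable choice of lifts, and this is precisely where the hypothesis $d\le 2$ is used. The wedge lies in $H^{2,0}(X)\cong H^0(C;\Omega^1_C\otimes f_*\omega_{X/C})$, and the freedom in lifting replaces $\omega_i$ by $\omega_i+f^*\xi_i$ with $\xi_i\in H^{1,0}(C)$; so the real statement is that the image of $\Lambda^2W$ in the quotient of $H^{2,0}(X)$ by $f^*H^{1,0}(C)\wedge H^{1,0}(X)$ vanishes. I expect to prove this by interpreting $W$ as the space of flat sections of the Hodge bundle $E=f_*\omega_{X/C}$ (invariance being flatness for the Gauss--Manin connection) and identifying the cup product $\Lambda^2W\to H^{2,0}(X)$ with an expression in the Kodaira--Spencer/connecting map that is annihilated by flatness; equivalently, by showing that the fixed-part map $a\colon X\to A$ becomes, after subtracting a contribution pulled back from $C$, a map whose image is a curve $D_0\subset A$. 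For $d=2$ only a single wedge must be killed and the abelian surface $A$ can be traded for a genus-$2$ curve, whereas for $d\geq 3$ neither the simultaneous vanishing of all pairwise wedges nor the realization of $A$ by a single curve is available in general, which is what confines the argument to $d\le 2$.
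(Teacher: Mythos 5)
Your $d=1$ argument is sound and is essentially the paper's: the fixed part of the Albanese gives an elliptic curve $A=E$, the map $F=(a,f)$ induces an isomorphism on $H^1$ by construction, and the nonvanishing of $a^*\eta\wedge f^*\xi$ (Proposition \ref{LerayHirsch} in the paper) forces $F$ to be generically finite, hence a ramified covering onto the smooth surface $E\times C$.

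The $d=2$ case, however, contains a genuine gap at exactly the point you flag. Producing lifts $\omega_1,\omega_2$ of a basis of $W=H^{1,0}(\Sigma_h)^\rho$ with $\omega_1\wedge\omega_2=0$ is not a technical step to be discharged by ``flatness of the Gauss--Manin connection'': via Castelnuovo--de Franchis, the existence of such an isotropic plane is \emph{equivalent} to the theorem in this case, so your reformulation (``the image of $\Lambda^2W$ in $H^{2,0}(X)/(f^*H^{1,0}(C)\wedge H^{1,0}(X))$ vanishes'') restates the claim rather than proving it. Concretely, writing $\omega_i=a_i\,f^*dt+(\text{vertical part})$ in local coordinates, one gets $\omega_1\wedge\omega_2=f^*dt\wedge(a_1\bar\omega_2-a_2\bar\omega_1)$, and the coefficients $a_i$ are functions on $X$, not on $C$; the theorem of the fixed part controls the fiberwise restrictions $\bar\omega_i$ but says nothing about these horizontal components, so there is no reason the wedge dies after adjusting the lifts by $f^*H^{1,0}(C)$ --- that it does is precisely what must be shown. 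The paper gets around this by an entirely different mechanism: it first passes to a \emph{reduced} model, normalizing the image $Y'$ of $\alpha\times f$ in $A\times C$ and proving the normalization $Y$ is smooth (a local analysis of quotient singularities via Brieskorn's theorem and an equivariance argument killing the cyclic isotropy); then, with $Y$ immersed as a hypersurface in the threefold $A\times C$, a normal-bundle Chern class computation yields $c_1^2(Y)=2c_2(Y)$, hence $\sigma(Y)=0$ by Hirzebruch, forcing $Y\cong D\times C$ with $g(D)=2$. None of this global input appears in your outline, and without it (or a genuine proof of the isotropy of some lift of $W$) the $d=2$ case is not established.
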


Kodaira fibrations are named after Kodaira, who constructed the first examples in \cite{Ko67}.  Most notably, Kodaira showed these examples have non-vanishing signature, demonstrating that the signature need not be multiplicative for general fiber bundles.  The construction was discovered independently by Atiyah \cite{At69}, and later refined by Hirzebruch \cite{Hir69}.   Various constructions of Kodaira fibrations were given subsequently by Kas \cite{Kas68}, Riera \cite{R77}, Gonzalez Diez--Harvey \cite{GH91}, Zaal \cite{Z95}, Bryan--Donagi \cite{BD02}, Catanese-Rollenske \cite{CaRo09}, and most recently by Flapan \cite{Fl17} and Lee--L\"onne--Rollenske \cite{LLR17}.  

The constructions listed above are fairly explicit, often formed by taking a suitable ramified covering of a product of curves.  One key feature that they have in common, implied by the branched covering, is the existence of nontrivial invariants in degree 1 $\Q$-cohomology.  Alternate constructions of Kodaira fibrations can be obtained by finding curves in moduli space in a much less explicit fashion; namely, by intersecting the Satake compactification of a finite orbifold covering of $\mathcal{M}_h$ with multiple hyperplanes in general position (see, for example \cite{Ar17}) in order to obtain a curve.  We call fibrations constructed in this way \textit{general complete intersection Kodaira fibrations}. This technique produces bundles with monodromy a finite index subgroup of the mapping class group.  In particular, the bundle will typically have no invariants in degree 1 $\Q$-cohomology.  

Our motivation for Theorem \ref{main1} therefore came from a desire to distinguish these two distinct constructions of Kodaira fibrations on the level of cohomology.  Curiously, although Flapan's examples are formed by the latter method, \textit{i.e.} taking a generic complete intersection, they have 1-dimensional holomorphic invariants, hence also arise from a branched covering construction by Theorem \ref{main1}. 

The first step in the proof of Theorem \ref{main1} works whenever there are nontrivial invariants.  In general, however, the target will not be a product, but instead another Kodaira fibration $p:Y\rightarrow C$ which is in some sense \textit{reduced} (Definition \ref{Reduced}) in that it immerses in its Albanese:

\begin{maintheorem}\label{main2} Let $f:X\rightarrow C$ be a Kodaira fibration with invariants $H^{1,0}(\Sigma_h)^\rho\neq 0$.  There exists a Kodaira fibration $p:Y\rightarrow C$ and a (ramified) covering $F:X\rightarrow Y$ satisfying \[\xymatrix{X\ar[rr]^F\ar[dr]_f&&Y\ar[dl]^p\\ &C&}\] and such that $Y$ immerses in its Albanese $\Alb(Y)$.  
\end{maintheorem}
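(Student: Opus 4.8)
The plan is to realize $Y$ as (the normalization of) the image of the Albanese map of $X$, and to produce $F$ as the induced finite map. First I would invoke the theorem of the fixed part: the monodromy invariants $H^1(\Sigma_h)^\rho$ carry a polarized weight-one Hodge structure, the edge homomorphism $H^1(X;\Q)\twoheadrightarrow H^1(\Sigma_h)^\rho$ is a surjective morphism of Hodge structures, and dually the Albanese of $X$ sits in an exact sequence of abelian varieties
\[0\to A\to \Alb(X)\to \J(C)\to 0,\]
where $A$ is the $d$-dimensional abelian variety attached to $H^1(\Sigma_h)^\rho$ and $\J(C)$ is the Jacobian of $C$. In particular the $d$ invariant holomorphic forms are restrictions of global holomorphic $1$-forms on $X$, which together with $f^*H^{1,0}(C)$ span $H^{1,0}(X)$, so that $\dim\Alb(X)=g(C)+d$. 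By construction the composite of the Albanese map $\alpha_X\colon X\to\Alb(X)$ with the projection $\Alb(X)\to\J(C)$ is $f$ followed by the Abel--Jacobi embedding of $C$; thus $\alpha_X$ lifts $f$.

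Next I would show that $\alpha_X$ is finite onto its image. The differential $d\alpha_X$ has rank $2$ on a dense open set: the pullbacks $f^*H^{1,0}(C)$ already account for the conormal direction to the fibers, while at least one invariant form is nonzero along a generic fiber, supplying the tangent direction along the fiber. Hence the image $Y_0=\alpha_X(X)\subset\Alb(X)$ is a surface and $\alpha_X$ is generically finite onto it. To upgrade this to finiteness I would rule out contracted curves: a curve collapsed by $\alpha_X$ lies in the common zero locus of all global $1$-forms, so in particular every $f^*$-form vanishes on it, forcing it into a fiber; but no fiber is contracted, since the invariant forms restrict nontrivially to the (irreducible) fibers. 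With no contracted curves, $\alpha_X\colon X\to Y_0$ is finite. Letting $Y$ be the normalization of $Y_0$ and using that $X$ is smooth, the finite map factors as $F\colon X\to Y$ followed by $Y\to Y_0$, giving a finite (ramified) covering with $p\circ F=f$, where $p\colon Y\to C$ is induced by $Y\to Y_0\subset\Alb(X)\to\J(C)$.

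Finally, for the immersion statement I would argue by functoriality: the normalization realizes $Y$ inside $\Alb(X)$, and the universal property factors this as $Y\xrightarrow{\alpha_Y}\Alb(Y)\to\Alb(X)$, so that if the composite $Y\to\Alb(X)$ is an immersion then so is $\alpha_Y$, which is precisely the assertion that $Y$ immerses in its Albanese. The main obstacle is therefore to control the singularities of $Y_0$ and the geometry of the normalization: I must show that the invariant linear system on the reduced model is base-point-free, equivalently that $Y\to\Alb(X)$ is an immersion (ruling out cuspidal behaviour), and simultaneously that $p\colon Y\to C$ is a submersion with smooth fibers whose moduli are nonconstant, so that $Y$ is a genuine Kodaira fibration (the alternative being the isotrivial or product degeneration, which is exactly the situation isolated in Theorem~\ref{main1}). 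This is the crux: the base locus of the invariant system on the fibers of $X$ is precisely the ramification of $F$, and the content of the reduction is that passing to $Y$ removes it while preserving the variation of the fibers inherited from $X$.
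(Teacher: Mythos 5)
Your overall strategy coincides with the paper's: pass to the Albanese, show the map from $X$ to its two--dimensional image is finite by ruling out contracted curves (your argument here is essentially the paper's, and is fine), take the normalization $Y$ of the image, and deduce the immersion statement for $\alpha_Y$ by functoriality from an immersion $Y\to\Alb(X)$. The only cosmetic difference is that the paper works with the product map $\alpha\times f:X\to A\times C$, where $A$ is a complement to $\ker(\Alb(X)\to\Alb(C))$ supplied by Poincar\'e reducibility, rather than with $\alpha_X$ into $\Alb(X)$ directly; these are interchangeable.

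However, there is a genuine gap: the step you explicitly defer as ``the crux'' --- that the normalization $Y$ is smooth, that $p:Y\to C$ is a submersion (so that $Y$ is actually a Kodaira fibration), and that $Y\to\Alb(X)$ is an immersion --- is precisely the substantive content of the theorem, and your proposal contains no argument for it. Note in particular that a normalization map is not an immersion in general (the normalization of a cuspidal curve is the standard counterexample), so ``the normalization realizes $Y$ inside $\Alb(X)$'' does not by itself give what you need. The paper closes this gap in two stages. First (Lemma \ref{mostlySmooth}) it shows the ramification divisor of $F'$ contains no fiber of $f$ (because the invariant forms restrict nontrivially to each fiber) and is generically transverse to the fibers, so that away from finitely many points $Y'$ is the image of an immersed smooth surface. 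Second, and decisively (Lemma \ref{SmoothImage}), it invokes Brieskorn's theorem: a normal surface finitely dominated by a smooth surface has only quotient singularities $\Delta/G$ with $G\leq \U(2)$ finite; lifting $F$ and $p$ to the local uniformization $\Delta$ via Hartogs, one finds that $\widehat{p}:\Delta\to C$ is a $G$-equivariant submersion at the singular point, and a comparison of the linear terms of the $G$-equivariant power series forces $G$ to be trivial. Hence $Y$ is smooth, $p$ is a submersion because $f=p\circ F$ is, and $Y\to A\times\Alb(C)$ is an immersion. Without an argument of this kind your proposal establishes only that $X$ maps finitely onto a (possibly singular) surface fibered over $C$, not the statement of the theorem.
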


In addition to invariant cohomology, the Atiyah--Kodaira examples  come equipped with holomorphic (multi)sections. This property holds more generally for all double \'etale Kodaira fibrations as in \cite{CaRo09}. On the group-theoretic level, their fundamental groups split as a semidirect product of the fundamental group of the base and the fiber.  For general smooth real surface-bundles over real surfaces, the group theoretic splitting of the fundamental group is equivalent to the bundle possessing a smooth section.  However, a holomorphic section seems more restrictive.  In \S4 we analyze Kodaira fibrations with holomorphic sections and show that for questions of monodromy there is no crucial difference between having a holomorphic section or not:
\begin{maintheorem}\label{main3}Let $f:X\rightarrow C$ be a Kodaira fibration with fiber genus $h\geq 2$.  There exists a branched covering $\widetilde{C}\rightarrow C$ and a pullback square \[\xymatrix{
\widetilde{X}\ar[r]\ar[d]_{\widetilde{f}}&X\ar[d]^f\\
\widetilde{C}\ar[r]&C
}\]such that $\widetilde{f}:\widetilde{X}\rightarrow \widetilde{C}$ has a section and the images of $\pi_1(C)$ and $\pi_1(\widetilde{C})$ in $\Mod(\Sigma_h)$ agree.  
\end{maintheorem}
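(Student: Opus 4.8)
The plan is to realise $\widetilde C$ as a multisection of $f$ sitting inside the total space $X$, so that the section of the pulled-back family appears tautologically as a diagonal. Since every Kodaira fibration is a smooth projective surface of general type, I would fix a projective embedding and use Bertini to choose a smooth connected curve $S\subset X$ in a sufficiently positive (very ample) linear system. Because $S$ is ample, it satisfies $S^2>0$, so it cannot coincide with a fibre $F$ (which has $F^2=0$) nor be a proper subcurve of the smooth irreducible fibre $\Sigma_h$; hence $g:=f|_S\colon S\to C$ is nonconstant, and a nonconstant morphism of smooth projective curves is a finite branched covering. I then set $\widetilde C:=S$, $\widetilde X:=X\times_C\widetilde C$, and let $\widetilde f$ be the projection to $\widetilde C$, with $\widetilde X\to X$ the other projection; this is manifestly a pullback square.

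I would first record that this is a legitimate pullback square of fibrations. As $f$ is a holomorphic submersion it is locally a coordinate projection, so even over the branch points of $g$ the fibre product $\widetilde X=X\times_C\widetilde C$ is smooth and $\widetilde f\colon\widetilde X\to\widetilde C$ is again a holomorphic submersion with fibre $\Sigma_h$; nonconstancy of the moduli is inherited because $g$ is finite and surjective. The section is then immediate: writing $\widetilde X=\{(x,s)\in X\times S:\ f(x)=g(s)\}$ and letting $\iota\colon S\hookrightarrow X$ denote the inclusion, the assignment $s\mapsto(\iota(s),s)$ is well defined since $f(\iota(s))=g(s)$, is visibly holomorphic, and composes with $\widetilde f$ to the identity. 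Thus $\widetilde f$ acquires a holomorphic section with no further effort, which is in fact stronger than the topological section one might have hoped for.

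The substantive point, and the step I expect to be the main obstacle, is the claim that the images of $\pi_1(C)$ and $\pi_1(\widetilde C)$ in $\Mod(\Sigma_h)$ agree. The monodromy of $\widetilde f$ is the composite $\widehat{\rho}\circ g_*\colon\pi_1(\widetilde C)\to\pi_1(C)\to\Mod(\Sigma_h)$, so its image is $\widehat{\rho}(g_*\pi_1(S))$, and for an arbitrary branched covering this can easily be a \emph{proper} subgroup of $\im\widehat{\rho}$ — this is precisely why the choice of multisection matters. To force equality I would exploit the ampleness of $S$: the Lefschetz hyperplane theorem applied to the smooth ample divisor $S$ in the smooth projective surface $X$ gives that $\iota_*\colon\pi_1(S)\to\pi_1(X)$ is surjective. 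Combined with the surjectivity of $f_*\colon\pi_1(X)\to\pi_1(C)$, which follows from the homotopy exact sequence of the fibration since the fibres are connected, this shows $g_*=f_*\circ\iota_*\colon\pi_1(S)\to\pi_1(C)$ is surjective. Consequently $\widehat{\rho}(g_*\pi_1(S))=\widehat{\rho}(\pi_1(C))$, which is exactly the asserted equality of monodromy images.

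In summary, the only nonformal input is the existence of a multisection whose fundamental group surjects onto that of the base after applying $f_*$, and ampleness together with Lefschetz provides this cleanly. I would take care in the write-up to justify the smoothness of $\widetilde X$ over the branch locus and the connectedness of $S$ (both guaranteed by taking the linear system sufficiently positive), and to note that the standing hypothesis $h\geq 2$ is what makes $X$ a surface of general type, hence projective, so that the Lefschetz-theoretic argument applies.
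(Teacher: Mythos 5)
Your proposal is correct and follows essentially the same route as the paper: Bertini to produce a smooth connected ample curve $\widetilde{C}\subset X$, the fiber product to manufacture the tautological section, and the Lefschetz hyperplane theorem to get surjectivity of $\pi_1(\widetilde{C})\to\pi_1(X)$ and hence equality of monodromy images. Your write-up is in fact slightly more careful than the paper's on two points the paper leaves implicit (ruling out that the ample curve is a fiber via $S^2>0$ versus $F^2=0$, and smoothness of the fiber product over the branch locus of $p$ via submersivity of $f$).
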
 
%

\textbf{Outline:} In \S2 we review background on Kodaira fibrations, fibered K\"ahler manifolds and discuss the Albanese map of a variety. In \S3, we prove Theorems \ref{main1} and \ref{main2}, and deduce some group theoretical consequences.  In \S4, we study Kodaira fibrations with a holomorphic section, and establish Theorem \ref{main3}. Finally, in \S5, we make some concluding remarks and propose questions for further study. 

\textbf{Acknowledgements:} I would like to thank several people on ``both sides" for many helpful comments and conversations.  In particular, I am indebted to Jason Starr for offering help on many points of algebraic geometry, and to Laure Flapan for describing her examples to me.  I am also grateful to Rob Lazarsfeld, Andy Putman, Nick Salter and Bena Tshishiku for many helpful comments and suggestions.

\section{Background}
In this section, we establish some notation and review some general results on K\"ahler groups and Kodaira fibrations which will frame our discussion in the sequel.
\begin{definition} A group $\Gamma$ is \textit{K\"ahler} if $\Gamma\cong \pi_1(X)$ for some compact K\"ahler manifold $X$.  
\end{definition}
It follows from compactness that any such $\Gamma$ is necessarily finitely presentable. The K\"ahler condition often places strong restrictions on $\Gamma$.  For an excellent introduction to K\"ahler groups see \cite{ABCKT96}.  

Among the simplest examples of K\"ahler manifolds are Riemann surfaces, or smooth, complex projective curves.  Accordingly, by \textit{surface group} we mean the fundamental group of a closed, orientable manifold of real dimension 2. Real surfaces are classified up to homeomorphism by the topological genus $g$.  Throughout, we denote the topological surface of genus $g$ by $\Sigma_g$ and its fundamental group by $\Pi_g=\pi_1(\Sigma_g)$. Given a Riemann surface $C$, we use the notation $g(C)$ for its topological genus.  

Surface groups hold a special place among all K\"ahler groups because of the following 
\begin{theorem}[Siu, Beauville \cite{ABCKT96}] \label{SiuBeauville}$X$ admits a surjective holomorphic map to some Riemann surface $C$ of genus $g'\geq g\geq 2$ with connected fibers if and only if there is a surjective homomorphism $\psi:\pi_1(X)\cong\Gamma\rightarrow \Pi_g$.
\end{theorem}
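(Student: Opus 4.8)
The statement has two directions: the forward implication is elementary and topological, while the reverse is the analytic heart of the theorem.

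\textbf{Forward direction.} Suppose $f:X\to C$ is a surjective holomorphic map with connected fibers onto a curve $C$ of genus $g'\geq g\geq 2$. A holomorphic map with connected fibers onto a curve induces a surjection $f_*:\pi_1(X)\to\pi_1(C)=\Pi_{g'}$ on fundamental groups. Since $g'\geq g$, collapsing $g'-g$ of the handles of $\Sigma_{g'}$ defines a degree-one pinch map $\Sigma_{g'}\to\Sigma_g$ which induces a surjection $\Pi_{g'}\twoheadrightarrow\Pi_g$. The composite $\psi=(\mathrm{pinch})_*\circ f_*:\pi_1(X)\to\Pi_g$ is then the required surjection.

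\textbf{Reverse direction.} Now I am handed a surjection $\psi:\pi_1(X)\twoheadrightarrow\Pi_g$ with $g\geq 2$ and must produce the fibration. First I would fix a hyperbolic, hence negatively curved, Riemann surface structure on $\Sigma_g$, which is a $K(\Pi_g,1)$, and realize $\psi$ by a continuous map $h:X\to\Sigma_g$. Since $\psi$ is surjective, the induced map on $\pi_1$ is all of $\Pi_g$, a nonabelian group. The goal is to replace $h$ by a holomorphic map in its homotopy class. Because $X$ is compact and $\Sigma_g$ is compact of nonpositive curvature, the theorem of Eells--Sampson produces a harmonic map $u:X\to\Sigma_g$ homotopic to $h$. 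The crucial input is then the Siu--Sampson Bochner technique: a harmonic map from a compact K\"ahler manifold to a Riemann surface of negative curvature is pluriharmonic, and such a map is either holomorphic, antiholomorphic, or of rank at most one everywhere, with image a point or a closed geodesic. Because $u$ induces $\psi$ and the $\pi_1$-image is the whole nonabelian group $\Pi_g$, the image of $u$ can be neither a point nor a geodesic; hence $u$ has rank two somewhere and is holomorphic or antiholomorphic. Replacing the complex structure on $\Sigma_g$ by its conjugate if necessary, which preserves the genus, I may assume $u:X\to\Sigma_g$ is a nonconstant holomorphic map, and therefore surjective since $\Sigma_g$ is an irreducible curve.

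Finally, I would take the Stein factorization $u=\iota\circ p$, where $p:X\to C$ has connected fibers and $\iota:C\to\Sigma_g$ is a finite morphism of smooth curves; then $p:X\to C$ is the desired fibration with connected fibers, and Riemann--Hurwitz applied to $\iota$ yields $2g(C)-2=(\deg\iota)(2g-2)+(\mathrm{ramification})\geq 2g-2$, so $g'=g(C)\geq g\geq 2$. The main obstacle is precisely this analytic step upgrading the merely continuous $h$ to a holomorphic map: establishing pluriharmonicity and then holomorphicity through the Bochner formula is the substantive content due to Siu, with the trichotomy over a curve due to Sampson. I note an alternative, more Hodge-theoretic route that sidesteps harmonic maps: pulling back the holomorphic $1$-forms of $\Sigma_g$ and using Hodge theory to locate a subspace of $H^0(X,\Omega^1_X)$ of dimension at least two on which the wedge product vanishes, one invokes the generalized Castelnuovo--de Franchis theorem to produce the pencil directly, after which the same Stein factorization concludes.
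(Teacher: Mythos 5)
The paper does not prove this statement; it is quoted from \cite{ABCKT96}, and the only hint of an argument the paper gives is the remark immediately following it, which points at the Castelnuovo--de Franchis route (isotropic subspaces of holomorphic $1$-forms). Your proof is correct and essentially complete, but your main line of argument is the \emph{other} standard route, namely Siu's harmonic-map proof: realize $\psi$ by a continuous map to a hyperbolic $K(\Pi_g,1)$, apply Eells--Sampson to get a harmonic representative, invoke the Sampson/Siu pluriharmonicity and the Siu--Carlson--Toledo trichotomy (holomorphic, antiholomorphic, or rank $\leq 1$ with image a point or closed geodesic), rule out the degenerate cases because $\Pi_g$ is nonabelian, and finish with Stein factorization and Riemann--Hurwitz to get $g'\geq g$. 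The forward direction via the pinch map and the surjectivity of $\pi_1$ for proper surjections with connected fibers is also fine (the latter is asserted rather than proved, but it is standard). The two routes buy different things: the harmonic-map argument is more robust (it works for representations to lattices in rank-one groups and generalizes to the nonabelian Hodge-theoretic setting), while the Castelnuovo--de Franchis argument you sketch as an alternative stays within classical Hodge theory; note, however, that your sketch of the latter elides the genuinely nontrivial step of extracting a $2$-dimensional isotropic subspace of $H^0(X,\Omega^1_X)$ from the surjection $\psi$ (Beauville's real isotropic subspace lemma, plus dealing with the case where the pulled-back fundamental class of $\Sigma_g$ is nonzero in $H^2(X)$), so as written only your harmonic-map argument is self-contained.
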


Whether or not a compact K\"ahler manifold admits a surjective holomorphic map to a high genus curve is determined entirely by the fundamental group. The crucial idea in the above theorem dates back to a classical result of Castelnuovo--de Franchis: a surjective holomorphic map to a curve pulls back an \textit{isotropic} subspace of holomorphic 1-forms.  Conversely, an isotropic subspace of holomorphic 1-forms provides a holomorphic map to some projective space $\mathbb{P}^n$ with 1-dimensional image.  

\subsection{Surface-by-surface groups and the monodromy representation}

\begin{definition}A \textit{surface-by-surface group} $\Gamma$ is any group which fits into a short exact sequence
\begin{equation}1\rightarrow\Pi_h\rightarrow\Gamma\xrightarrow{\phi} \Pi_g\rightarrow 1 
\label{eq:MainSequence}\tag{$*$}\end{equation} where $g,h\geq 2$. 
\end{definition}
Topologically, extensions of the form $(*)$ are represented by surface-bundles over surfaces. In particular, $\Gamma$ will have a real 4-dimensional Eilenberg--Maclane space.  In what follows, we consider the case when $\Gamma$ as in $(*)$ is K\"ahler. Applying Theorem \ref{SiuBeauville} to $\phi$, we conclude that there exists a Riemann surface $C'$ of genus $g'\geq g$, together with a surjection $f:X\rightarrow C'$. In fact, because $\Pi_h$ is finitely presentable, we have a stronger result

\begin{proposition}[Kotschick \cite{Kot99}] There exists a surjective holomorphic map $f:X\rightarrow C$ to a curve $C$ of genus $g$ with connected fibers inducing $\phi$ on $\pi_1$. 
\end{proposition}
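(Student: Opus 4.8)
The plan is to feed the surjection $\phi$ into Theorem \ref{SiuBeauville} and then exploit the finite generation of $\ker\phi=\Pi_h$ to prevent any increase in genus. Applying Theorem \ref{SiuBeauville} to $\phi$ produces a surjective holomorphic map $f:X\rightarrow C'$ with connected fibers onto a curve of genus $g'=g(C')\geq g$. The first step I would take is to record the sharp form of the Castelnuovo--de Franchis/Siu--Beauville construction: the fibration $f$ can be chosen so that the given surjection $\phi$ \emph{factors through} the map it induces on $\pi_1$. That is, writing $f_*:\Gamma\rightarrow\pi_1(C')\cong\Pi_{g'}$ for the (surjective) induced map, there is a surjection $\sigma:\Pi_{g'}\twoheadrightarrow\Pi_g$ with $\phi=\sigma\circ f_*$, equivalently $\ker f_*\subseteq\ker\phi$. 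This is the geometric input that converts the purely group-theoretic hypothesis into a statement about an honest holomorphic fibration.

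The heart of the argument is then to show that $\sigma$ is an isomorphism. Since $\phi=\sigma\circ f_*$ and $f_*$ is surjective, restricting $f_*$ to $\ker\phi$ gives a surjection $\ker\phi\twoheadrightarrow\ker\sigma$ with kernel $\ker\phi\cap\ker f_*=\ker f_*$, so $\ker\sigma\cong\ker\phi/\ker f_*$. In particular $\ker\sigma$ is a quotient of $\ker\phi=\Pi_h$, hence finitely generated. On the other hand $\ker\sigma$ is a normal subgroup of the surface group $\Pi_{g'}$. I would now invoke the classical dichotomy that a finitely generated normal subgroup of a surface group of genus at least $2$ is either trivial or of finite index (an infinite-index subgroup of a surface group is free, and $\Pi_{g'}$ admits no nontrivial finitely generated free normal subgroup of infinite index). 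Finite index is impossible here because the quotient $\Pi_g$ is infinite, so $\ker\sigma=1$. Thus $\sigma$ is simultaneously injective and surjective, hence an isomorphism, forcing $g'=g$; taking $C:=C'$, the map $f$ induces $\phi$ after identifying $\pi_1(C)\cong\Pi_{g'}$ with $\Pi_g$ via $\sigma$.

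The main obstacle is the very first step: one must extract from Theorem \ref{SiuBeauville} not merely the existence of \emph{some} fibration, but a fibration through which $\phi$ genuinely factors, so that $\ker f_*\subseteq\ker\phi$. This containment is exactly what makes the finite-generation argument applicable. Were only bare existence available, $\phi$ and $f_*$ could a priori be unrelated surjections, and one would instead have to compare the two normal subgroups $\ker\phi$ and $\ker f_*=\mathrm{im}(\pi_1(F'))$ directly; fortunately the same surface-group dichotomy applies to $f_*(\ker\phi)\trianglelefteq\Pi_{g'}$ and lets one run a parallel analysis. I expect the remaining steps to be routine: the identification $\ker\sigma\cong\Pi_h/\ker f_*$ is immediate from the factorization, and the triviality of finitely generated infinite-index normal subgroups of $\Pi_{g'}$ is standard.
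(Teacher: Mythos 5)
The paper offers no proof of this proposition---it is quoted from Kotschick \cite{Kot99}---but your argument is precisely the standard one behind that citation and matches the paper's one-line hint (``because $\Pi_h$ is finitely presentable''): apply Siu--Beauville in its refined form, in which $\phi$ factors as $\sigma\circ f_*$, and then kill $\ker\sigma$ using the dichotomy for finitely generated normal subgroups of surface groups. The one point to keep in view, which you correctly flag as the main obstacle, is that the factorization $\phi=\sigma\circ f_*$ is part of the isotropic-subspace refinement of Castelnuovo--de Franchis (a priori through the orbifold fundamental group of $C'$, which suffices here since $\Pi_g$ is torsion-free) and not of the bare existence statement; your parenthetical fallback for the ``bare existence'' case does not obviously close, since $f_*(\ker\phi)$ could in principle land in the finite-index branch of the dichotomy.
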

When $X$ is 2-dimensional, \textit{i.e.} a complex surface, the map $f$ gives $X$ the structure of Riemann surface bundle over a Riemann surface, possibly with finite many singular fibers. If there are no singular fibers, we recall the following characterization due independently to Hillman, Kapovich and Kotschick:

\begin{theorem}[Hillman \cite{Hi00}, Kapovich \cite{K98}, Kotschick \cite{Kot99}]\label{HKK} Let $f:X\rightarrow C$ be as above.  $X$ is aspherical if and only if $f$ is a holomorphic submersion.
\end{theorem}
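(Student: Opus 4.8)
The statement is an equivalence, so I would prove the two implications separately; the forward direction (submersion $\Rightarrow$ aspherical) is routine, while the converse carries the real content. For the easy direction, suppose $f$ is a holomorphic submersion. Since $X$ is compact, $f$ is proper, so by Ehresmann's theorem it is a smooth locally trivial fiber bundle with fiber a closed oriented surface $F$. Because $f$ induces $\phi$ on $\pi_1$, the bundle sequence identifies $\pi_1(F)$ with $\ker\phi=\Pi_h$, so $F\cong\Sigma_h$ with $h\geq 2$, which is aspherical; the base $\Sigma_g$ is aspherical with $\pi_2(\Sigma_g)=0$. Feeding the bundle $\Sigma_h\to X\to\Sigma_g$ into the long exact sequence of homotopy groups then gives $\pi_n(X)=0$ for all $n\geq 2$, so $X$ is aspherical.

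For the converse I would argue the contrapositive by comparing Euler characteristics: if $f$ fails to be a submersion, I will show $\chi(X)$ strictly exceeds the value forced by asphericity. If $X$ is aspherical it is a finite $K(\Gamma,1)$, so $\chi(X)=\chi(\Gamma)$, and multiplicativity of the Euler characteristic in the extension $(*)$ yields
\[
\chi(\Gamma)=\chi(\Pi_h)\,\chi(\Pi_g)=(2-2h)(2-2g).
\]
On the geometric side, let $h'$ denote the genus of a general fiber $F$ of $f$. Over the complement $C^\circ\subset C$ of the finite set of critical values, $f$ restricts to a fiber bundle, and the standard fact that a general fiber generates $\ker f_*=\ker\phi=\Pi_h$ provides a surjection $\Pi_{h'}\twoheadrightarrow\Pi_h$; passing to abelianizations forces $h'\geq h$.

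The crux is the second expression for $\chi(X)$. The topological Euler characteristic of the fibration decomposes as
\[
\chi(X)=\chi(C)\chi(F)+\sum_{p}\bigl(\chi(X_{p,\mathrm{red}})-\chi(F)\bigr)=(2-2g)(2-2h')+S,
\]
where the sum runs over critical values $p$ and $S\geq 0$. The key input is that for general fiber genus $h'\geq 2$ each local term $\chi(X_{p,\mathrm{red}})-\chi(F)$ is nonnegative and vanishes precisely for a smooth reduced fiber; in particular a multiple fiber $mF_0$ contributes $2(m-1)(g(F_0)-1)$, which is strictly positive once $h'\geq 2$. Equating the two expressions for $\chi(X)$ gives $(2-2g)\cdot 2(h'-h)=S\geq 0$, and since $2-2g<0$ this forces $h'\leq h$. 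Combined with $h'\geq h$ we get $h'=h$ and $S=0$, so $f$ has no singular fibers and is therefore a submersion.

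The main obstacle is the positivity-with-equality of the local Euler contributions when $h'\geq 2$, and specifically the need to rule out multiple fibers: these contribute $0$ in the elliptic case $h'=1$ but strictly positively here, which is exactly where the hypothesis $h\geq 2$ is indispensable. A secondary point requiring care is the identification $\ker f_*=\Pi_h$ together with the deduction $h'\geq h$; I would justify this through the standard fact that the image of a general fiber generates $\ker f_*$, using that the critical values are removed in real codimension two so that $\pi_1(X^\circ)\to\pi_1(X)$ and $\pi_1(C^\circ)\to\pi_1(C)$ are surjective and the meridional classes die in $\pi_1(X)$.
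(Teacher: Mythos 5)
The paper does not actually prove this statement---it is quoted from Hillman, Kapovich and Kotschick---so there is no in-house argument to compare against; but your proof is essentially the standard one (it is close to Kotschick's own argument in \cite{Kot99}), and both the easy direction via Ehresmann plus the homotopy exact sequence and the Euler-characteristic bookkeeping $(2-2g)\cdot 2(h'-h)=S\geq 0$ with $2-2g<0$ are correct. The local nonnegativity $\chi(X_{p,\mathrm{red}})-\chi(F)\geq 0$, with equality exactly for smooth reduced fibers when the general fiber has genus $\geq 2$, is the standard Zeuthen--Segre-type statement, and your computation $2(m-1)(g(F_0)-1)$ for a multiple fiber is right.

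The one genuine soft spot is the inequality $h'\geq h$. You derive it from ``a general fiber generates $\ker f_*=\Pi_h$,'' justified by killing meridians. That justification is incomplete precisely in the situation you most need to worry about: if there are multiple fibers, the image of $\pi_1(F)$ in $\pi_1(X)$ is only the kernel of the map to the \emph{orbifold} fundamental group of $C$, which can be a proper subgroup of $\ker\phi$; a loop over a meridian around a multiple fiber of multiplicity $m$ need not be absorbed into the general fiber, only its $m$-th power is. So the surjection $\Pi_{h'}\twoheadrightarrow\Pi_h$ as stated is not available a priori. The inequality survives, but by a different mechanism: the image $N$ of $\pi_1(F)$ is a finitely generated normal subgroup of $\Pi_h$ with $\Pi_h/N$ normally generated by torsion. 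By the Greenberg-type theorem for surface groups, a nontrivial normal subgroup of infinite index is infinitely generated, so $N$ is either trivial or of finite index; triviality is impossible because the torsion-free nontrivial group $\Pi_h$ cannot be normally generated by torsion elements, and if $[\Pi_h:N]=k$ then $N\cong\Pi_{k(h-1)+1}$, whence $h'\geq k(h-1)+1\geq h$. With that substitution your argument closes: $h'=h$, $S=0$, every fiber is smooth and reduced, and $f$ is a submersion.
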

Since the target is complex 1-dimensional, here a submersion means that the holomorphic differential $Df$ does not vanish at any point of $X$. In this case, by Ehresmann's fibration theorem, $X$ is the total space of $C^\infty$ locally trivial fiber bundle over $C$, where the fibers are closed orientable surfaces of genus $h$. Since $X$ has a complex structure, each fiber will therefore be a Riemann surface of genus $h$. 

\begin{definition} A \textit{Kodaira fibration} is a complex surface $X$ together with a holomorphic submersion with connected fibers $f:X\rightarrow C$, which is not locally trivial as a holomorphic bundle.  
\end{definition}

It is known that if $f:X\rightarrow C$ is a Kodaira fibration then the genus of $C$ is at least $2$, and the genus of the fibers is at least 3 \cite{Kas68}. By the classification of surfaces (see, for example \cite{Beau83} ), any such $X$ will be of general type, hence K\"ahler and projective. 

The bundle structure of $X$ is determined up to diffeomorphism \cite{Kot99} by the \textit{monodromy representation} $\widehat{\rho}:\Pi_g\rightarrow \Mod(\Sigma_h)$, where $\Mod(\Sigma_h)$ is the mapping class group of $\Sigma_h$.  The induced action on cohomology is trivial on $H^0(\Sigma_h;\Z)$ and $H^2(\Sigma_h;\Z)$ and preserves the natural cup product pairing on $H^1(\Sigma_h;\Z)$.  Since the latter is skew-symmetric and we obtain a monodromy representation $\rho:\Pi_g\rightarrow \Sp_{2h}(\Z)$.  

\subsection{Invariant cycles theorem}For any point $t\in C$, we denote by $X_t$ the fiber $f^{-1}(t)$, with inclusion map $\iota:X_t\hookrightarrow X$. Since $X$ is K\"ahler, $H^*(X;\Q)$ admits a Hodge structure and $f^*$, $\iota^*$ are morphisms of Hodge structures.  The Leray--Serre spectral sequence associated to $f$ degenerates at the $E^2$-page (See Voisin \cite{Vo03}, Morita \cite{Mor87}), and we have a five-term short exact sequence of rational cohomology:
\[0\rightarrow H^1(C;\Q)\xrightarrow{f^*} H^1(X;\Q)\xrightarrow{\iota^*} H^1(X_t;\Q)^{\rho}\rightarrow 0.\]
This is a special (trivial) case of the invariant cycles theorem. 

Since $X$ is projective, we can choose a polarization $Q$ on $H^*(X;\Q)$ and split the above exact sequence after tensoring with $\Q$:\[H^1(X;\Q)\cong V_\Q\oplus W_\Q,\] where $W_\Q=\im(f^*)\cong H^1(C;\Q)$ and $V_\Q$ is a weight-1 $\Q$-Hodge substructure of $H^1(X;\Q)$ which maps isomorphically onto $H^1(X_t;\Q)^{\rho}$ under $\iota^*$. Upon tensoring with $\C$, we can decompose $V$ and $W$ further into holomorphic and antiholomorphic components:\[V_\C=V^{1,0}\oplus V^{0,1}\]\[W_\C=W^{1,0}\oplus W^{0,1}.\]

Given any $\theta\neq\in V^{1,0}$, the restriction $\iota^*(\theta)$ is a holomorphic 1-form on $X_t$ and $\iota^*(\theta\wedge\overl{\theta})$ generates the fiber class $H^2(X_t;\Q)$.  We record here the following important consequence of this fact:
\begin{proposition}\label{LerayHirsch} For any $\theta\in V^{1,0}$ and any $\eta\in W^{1,0}$, $\theta\wedge\eta\neq 0$.  
\end{proposition}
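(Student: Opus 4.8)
The plan is to prove that the holomorphic $(2,0)$-form $\theta\wedge\eta$ is nonzero (the statement should of course be read with $\theta,\eta$ both nonzero, else $\theta\wedge\eta=0$ trivially) by pairing it against its conjugate to produce a top-degree form on $X$ with nonzero integral. Since $X$ is a compact K\"ahler surface and $\theta\wedge\eta$ is holomorphic, hence harmonic, it vanishes in $H^2(X;\C)$ if and only if it vanishes as a form. Moreover, for any $(2,0)$-form $\Omega$ on a complex surface, $\Omega\wedge\overline\Omega$ is a form of definite sign (a nonnegative multiple of the volume form, up to a universal constant), so $\int_X\Omega\wedge\overline\Omega\neq 0$ precisely when $\Omega\neq 0$. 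Applying this to $\Omega=\theta\wedge\eta$ and using $\overline{\theta\wedge\eta}=\overline\theta\wedge\overline\eta$, it therefore suffices to show that, up to sign,
\[\int_X\theta\wedge\overline\theta\wedge\eta\wedge\overline\eta\neq 0.\]

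To evaluate this integral I would first use that $f^*$ is a morphism of Hodge structures, so that $W^{1,0}=f^*\big(H^{1,0}(C)\big)$ and hence $\eta=f^*\omega$ for a \emph{nonzero} holomorphic $1$-form $\omega$ on $C$ (nonzero because $f^*$ is injective and $\eta\neq 0$). Then $\eta\wedge\overline\eta=f^*(\omega\wedge\overline\omega)$, and I would invoke the projection (fiber-integration) formula for the submersion $f$ applied to the closed $(1,1)$-form $\theta\wedge\overline\theta$:
\[\int_X(\theta\wedge\overline\theta)\wedge f^*(\omega\wedge\overline\omega)=\int_C\big(f_*(\theta\wedge\overline\theta)\big)\,\omega\wedge\overline\omega,\]
where $f_*$ denotes integration along the fibers.

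The key step is to identify $f_*(\theta\wedge\overline\theta)$. Integration along the fiber of a closed form is closed, so $f_*(\theta\wedge\overline\theta)$ is a closed $0$-form, i.e. a constant $c$ on the connected curve $C$; its value is $c=\int_{X_t}\iota^*(\theta\wedge\overline\theta)$, independent of $t$ since all fibers are mutually homologous in $X$. By the fact recorded immediately before the statement, for $\theta\neq 0$ the restriction $\iota^*(\theta\wedge\overline\theta)$ generates $H^2(X_t;\Q)$ and is in particular nonzero, so $c\neq 0$. Since $\omega$ is a nonzero holomorphic $1$-form on the compact Riemann surface $C$ we also have $\int_C\omega\wedge\overline\omega\neq 0$, and therefore $\int_X\theta\wedge\overline\theta\wedge\eta\wedge\overline\eta=c\int_C\omega\wedge\overline\omega\neq 0$, giving $\theta\wedge\eta\neq 0$. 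The only genuine point requiring care is the bookkeeping in the projection formula---that a degree-$2$ form integrated over the real $2$-dimensional fibers yields a \emph{constant} (not merely locally constant) function on $C$---but this is immediate from the connectedness of $C$ and the homological triviality of the family of fibers. Everything else reduces to the properties of $V^{1,0}$ and $W^{1,0}$ already established, so I expect the argument to be short.
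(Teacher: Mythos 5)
Your argument is correct, and it begins from the same reduction as the paper: both proofs study the $(2,2)$-form $(\theta\wedge\eta)\wedge\overline{(\theta\wedge\eta)}=-(\theta\wedge\overline{\theta})\wedge(\eta\wedge\overline{\eta})$ and exploit that $\eta$ is pulled back from $C$ while $\theta$ restricts to a nonzero holomorphic form on each fiber. The mechanism for showing this form is nontrivial is genuinely different, however. The paper argues pointwise: at a generic point the tangent space splits into vertical and horizontal summands, $\eta\wedge\overline{\eta}$ vanishes on the vertical directions and is nonzero on the horizontal ones, while $\theta\wedge\overline{\theta}$ is nonzero on the vertical ones, so the product is a nonzero multiple of the volume form there; nontriviality in cohomology then follows from harmonicity. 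You instead evaluate the integral globally via the projection formula, arriving at
\[\int_X\theta\wedge\overline{\theta}\wedge\eta\wedge\overline{\eta}=\Bigl(\int_{X_t}\iota^*\theta\wedge\overline{\iota^*\theta}\Bigr)\Bigl(\int_C\omega\wedge\overline{\omega}\Bigr)\neq 0,\]
where $\eta=f^*\omega$. Your route absorbs the paper's ``at almost every point'' caveats into two integrals of forms of definite sign and yields an explicit formula for the pairing; the paper's route is shorter and avoids invoking fiber integration, at the cost of a slightly more informal pointwise discussion. One step you should make explicit is that $\iota^*\theta\neq 0$ for $0\neq\theta\in V^{1,0}$: this follows because $\iota^*$ maps $V$ isomorphically onto $H^1(X_t;\Q)^{\rho}$ as Hodge structures, which is precisely the fact the paper records just before the proposition, so your appeal to it is legitimate.
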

\begin{proof} Consider the (2,2)-form $\nu=(\theta\wedge \eta)\wedge (\overl{\theta}\wedge\overl{\eta})=-(\theta\wedge\overl{\theta})\wedge(\eta\wedge\overl{\eta})$.  At almost every point of $X$, $\eta\wedge\overl{\eta}\neq0$ and vanishes on the vertical tangent bundle.  At almost every point of $X$, $\theta\wedge\overl{\theta}\neq0$ and does not vanish on the vertical tangent bundle.  Since the tangent bundle splits at each point into vertical and horizontal components, we see that $\nu$ does not vanish identically, and since it is harmonic, it is nontrivial in cohomology.  Thus, $\theta\wedge \eta$ is nontrivial as well.  
\end{proof} 

\begin{definition}A Kodaira fibration $f:X\rightarrow C$ is called \textit{branched} if there exists another curve $D\neq C$ of genus at least one and a finite-degree branched covering $F:X\rightarrow C\times D$ such that $f=p_C\circ F$, and $g=p_D\circ F:X\rightarrow D$ have connected fibers. $F$ is \textit{maximal} if $F^*$ is an isomorphism on $H^1$ with $\Q$-coefficients.  
\end{definition}
\begin{rmk}\label{Isotropic}
If $f:X\rightarrow C$ is branched, then $g^*H^{1,0}(D)$ is a maximal isotropic subspace of $H^{1,0}(X)$, such that $g^*H^{1,0}(D)\cap f^*H^{1,0}(C)=0$.  Conversely, if $L\subseteq H^{1,0}(X)$ is an isotropic subspace of dimension $\geq 2$ and $L\cap f^*H^{1,0}(C)=0$, then $L$ defines a map to a curve and  guarantees $X$ is branched. 
\end{rmk}

\subsection{The Albanese map}\label{Albanese} Integration of 1-forms over cycles in $H_1(X;\Z)$ induces a map \[j:H_1(X;\Z)\rightarrow H^{1,0}(X)^\vee\] as follows. Any homology class $[c]\in H_1(X;\Z)/\{\text{torsion}\}$ induces a homomorphism $H^{1,0}(X)\rightarrow \C$ via:\[j:c\mapsto \left(\gamma\mapsto\int_c\gamma\right)\]

From Hodge theory, the image of $j$ is a lattice in $H^{1,0}(X)^\vee=(V^{1,0})^\vee\oplus (W^{1,0})^\vee$.  We will analyze the map $\alpha_X:X\rightarrow \Alb(X)$ defined as follows.  Fix a basepoint $x_0\in X$, given any $x\in X$ and any holomorphic 1-form $\gamma\in H^{1,0}(X)$, define \[\alpha_X(x)=\int_{x_0}^x\gamma \] where the integral is taken along any path from $x_0$ to $x$.  This is well-defined mod $\im(j)$ since the integral over a different path will differ by an element of the lattice. Recall that $\Alb(X)$ is universal with respect to holomorphic maps from $X$ to any abelian variety. In other words, if $A$ is an abelian variety and $h:X\rightarrow A$ is any holomorphic map, then there is a unique holomorphic map $\alpha_X(h):\Alb(X)\rightarrow A$ such that 
\[\xymatrix{& \Alb(X)\ar[d]^{\alpha_X(h)}\\
X\ar[ur]^{\alpha_X}\ar[r]^h&A}\]
commutes.

\section{Kodaira fibrations with invariant cohomology}
Let $f:X\rightarrow C$ be a Kodaira fibration with monodromy representation $\rho$.  Choose $t\in C$ and assume that $H^1(X_t;\Q)^\rho\neq 0$. By the invariant cycles theorem above, $H^1(X_t;\Q)^\rho=\im(\iota^*)$ has a Hodge structure and hence there is a decomposition\[H^1(X_t;\C)^\rho=H^{1,0}(X_t)^\rho\oplus H^{0,1}(X_t)^\rho.\]
We call $H^{1,0}(X_t)^\rho$ the \textit{holomorphic invariants} of $f:X\rightarrow C$.

\begin{definition}\label{Reduced}A Kodaira fibration $f:X\rightarrow C$ is called \textit{reduced} if the Albanese map $\alpha_X:X\rightarrow \Alb(X)$ is an immersion.  
\end{definition}

Our first result shows that every Kodaira fibration admits a (ramified) covering over a reduced Kodaira fibration.
\begin{theorem}\label{ReducedBranched}Suppose $f:X\rightarrow C$ is a Kodaira fibration with $H^{1,0}(X_t)^\rho\neq 0$.  Then there exists a reduced Kodaira fibration $p:Y\rightarrow C$ and a (ramified) covering $F:X\rightarrow Y$ such that $f=p\circ F$ and $F^*$ is an isomorphism on $H^1$ with $\Q$-coefficients.
\end{theorem}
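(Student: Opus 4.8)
The plan is to construct $Y$ directly inside the product of an abelian variety with $C$, using the monodromy-invariant holomorphic forms to manufacture the second fibration. Recall from the invariant cycles decomposition that $H^1(X;\Q)=V_\Q\oplus W_\Q$, where $W_\Q=f^*H^1(C;\Q)$ and $\iota^*$ carries $V_\Q$ isomorphically onto $H^1(X_t;\Q)^\rho$, refining over $\C$ to the holomorphic invariants $V^{1,0}$. Since $V$ is a polarizable weight-one $\Q$-Hodge substructure, I set $V_\Z=V_\Q\cap H^1(X;\Z)$ and let $A$ be the associated abelian variety, so $\dim_\C A=\dim_\C V^{1,0}=d$ and $H^{1,0}(A)\cong V^{1,0}$. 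Integration of the forms in $V^{1,0}$ yields a holomorphic map $\alpha_V\colon X\to A$ with $\alpha_V^*H^{1,0}(A)=V^{1,0}$, and I form
\[\Psi=(\alpha_V,f)\colon X\longrightarrow A\times C.\]

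First I would show $\Psi$ is generically finite onto its image. By Proposition \ref{LerayHirsch}, for nonzero $\theta\in V^{1,0}$ and $\eta\in W^{1,0}$ the product $\theta\wedge\eta$ is nonzero, so the forms pulled back from $A$ and from $C$ are pointwise independent on a dense open set; hence $d\Psi$ has rank $2$ generically and $\Psi(X)$ is a surface. Next, $\Psi$ contracts no curve: a curve on which $f$ is constant lies in a fiber $X_t$, and $\alpha_V|_{X_t}$ is nonconstant because $H^{1,0}(X_t)^\rho\neq 0$; any other curve has nonconstant image under $f$. A proper generically finite morphism of surfaces that contracts no curve has finite fibers, hence is finite onto its image. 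I therefore let $Y$ be the normalization of $\Psi(X)$, with $F\colon X\to Y$ the induced finite (ramified) covering and $p\colon Y\to C$ the map induced by the projection $A\times C\to C$, so that $f=p\circ F$.

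That $F^*$ is an isomorphism on $H^1(-;\Q)$ follows formally: $F^*$ is injective because $F$ is finite surjective, and its image contains $\alpha_V^*H^{1,0}(A)=V^{1,0}$ and $f^*H^{1,0}(C)=W^{1,0}$ together with their conjugates, which already span $H^1(X;\C)$, so $F^*$ is also surjective. To see that $p\colon Y\to C$ is a Kodaira fibration I would identify its fibers as the image curves $Y_t=\alpha_V(X_t)\subseteq A$, check that $p$ is a submersion with connected fibers of genus at least $2$, and observe that the moduli cannot be constant, since $F^*$ is an isomorphism while $f$ has nonconstant moduli.

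Finally comes reducedness in the sense of Definition \ref{Reduced}. By construction $H^{1,0}(Y)$ decomposes as the invariant forms pulled back from $A$ together with $p^*H^{1,0}(C)$, so $\Alb(Y)$ is isogenous to $A\times\Alb(C)$, and the Albanese map of $Y$ is, up to isogeny, the inclusion $Y\hookrightarrow A\times C$ followed by $\id_A\times(C\hookrightarrow\Alb(C))$. Since $C\hookrightarrow\Alb(C)$ is an embedding for $g(C)\geq 1$, the horizontal direction is always separated, and the vertical direction is separated exactly when the fiber maps $Y_t\to A$ are immersions. This last point is the crux and the step I expect to be hardest: the image curves $\alpha_V(X_t)$ could a priori acquire cusps where the pulled-back invariant forms vanish, which would obstruct the Albanese immersion even after normalization. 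The plan is to exploit the monodromy-invariance of $V^{1,0}\subseteq H^{1,0}(X_t)$ — the fact that it is a flat subbundle of the Hodge bundle — to control the base locus of this linear system along the fibers, rule out cuspidal degenerations, and thereby guarantee that the fibers of $p$ are smooth and that $Y$ is a smooth, reduced Kodaira fibration. Everything outside this analysis of the fiberwise invariant system is formal.
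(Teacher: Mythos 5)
Your construction is essentially the paper's: you map $X$ into $A\times C$ via the invariant holomorphic forms together with $f$, check the map is finite onto its image (no contracted curves, since a contracted curve would be an entire irreducible fiber $X_t$, contradicting the $H^1$ isomorphism), pass to the normalization $Y$, and get the $H^1$ isomorphism formally from the splitting $H^1(X;\Q)\cong V_\Q\oplus W_\Q$. The only cosmetic difference is that you build $A$ directly from the polarizable Hodge substructure $V_\Q$, whereas the paper takes $A=\ker(\alpha(f))\subseteq\Alb(X)$ and uses Poincar\'e complete reducibility to split off an isogeny $A\times\Alb(C)\to\Alb(X)$; these agree up to isogeny and nothing hinges on the choice. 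All of that part of your argument is sound.

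The genuine gap is exactly where you say it is: you never prove that $Y$ is smooth and that $Y\to A\times C$ (hence $\alpha_Y$) is an immersion; you only announce a ``plan'' to rule out cuspidal degenerations by controlling the base locus of the fiberwise invariant linear system. That plan is aimed at the wrong target: there is no reason the ramification divisor $R$ of $F'$ cannot be singular or tangent to a fiber at finitely many points, so you cannot expect to rule out singularities of the image $Y'$ outright. The paper's route is different and is the real content of the theorem. First, a local analysis (Lemma \ref{mostlySmooth}) shows $R$ contains no fiber --- because each invariant form restricts to a nonzero holomorphic $1$-form on $X_t$ with finitely many zeros --- and that wherever $R$ is smooth and transverse to the fibers, $F'$ is locally a fiberwise branched cover of a polydisk along a section, so the image is smooth there by Mumford's topological criterion; this leaves only finitely many bad points. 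Second (Lemma \ref{SmoothImage}), at those points one invokes Brieskorn's theorem to see the normalization $Y$ has only quotient singularities $\Delta/G$ with $G\leq\U(2)$ acting freely on $\Delta\setminus\{0\}$, extends the fibration $p$ to a $G$-equivariant holomorphic submersion $\widehat{p}:\Delta\to C$ by Hartogs, observes that the smooth $G$-invariant fiber $\widehat{p}^{-1}(t_0)$ forces $G$ to be cyclic and diagonalizable, and then compares the power series of $\widehat{p}$ with its $G$-translate to conclude $G$ is trivial. It is this equivariance computation --- not a base-locus argument about the flat subbundle --- that kills the singularities, and without it (or a substitute) your proof of reducedness does not close.
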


If the dimension of  $H^{1,0}(X_t)^\rho$ is sufficiently small, the reduced Kodaira fibration above will actually be a product.  
\begin{theorem}\label{BranchedCriterion}Suppose $f:X\rightarrow C$ is a Kodaira fibration with  $\dim_\C H^{1,0}(X_t)^\rho=1,2$.
Then there exists a curve $D$ and a branched cover $F:X\rightarrow D\times C$ such that 
\begin{enumerate}
\item $f=p_C\circ F$ where $p_C:D\times C\rightarrow C$ is the projection onto $C$.
\item $F^*$ induces an isomorphism $H^1(D\times C;\Q)\xrightarrow{\sim} H^1(X;\Q).$
\end{enumerate}
\end{theorem}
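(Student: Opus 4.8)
The plan is to bootstrap from Theorem \ref{ReducedBranched}, reducing to the case where the fibration is already reduced. Applying that theorem, I obtain a reduced Kodaira fibration $p\colon Y\to C$ and a ramified covering $F_0\colon X\to Y$ with $f=p\circ F_0$ and $F_0^*$ an isomorphism on $H^1(-;\Q)$. Since ramified coverings compose and $F_0^*$ is an isomorphism, it suffices to construct a branched cover $\Phi\colon Y\to D\times C$ with $p=p_C\circ\Phi$ whose pullback is an isomorphism on $H^1$; then $F=\Phi\circ F_0$ does the job for $X$. So from now on I assume $Y=X$ is reduced, i.e.\ $\alpha_Y\colon Y\to\Alb(Y)$ is an immersion, and I keep the notation $V^{1,0}=H^{1,0}(X_t)^\rho$, $W^{1,0}=p^*H^{1,0}(C)$, with $d=\dim_\C V^{1,0}\in\{1,2\}$.

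The heart of the matter is to produce a second fibration $g\colon Y\to D$ over a curve of genus $d$ from the invariant cohomology. The sub-Hodge structure $V\subseteq H^1(Y;\Q)$ is polarizable, hence cuts out an abelian variety $A$ of dimension $d$ with $H^{1,0}(A)=V^{1,0}$; concretely $\Alb(Y)$ is isogenous to $A\times\Alb(C)$, and projection onto the first factor gives a holomorphic map $\beta\colon Y\to A$ with $\beta^*H^{1,0}(A)=V^{1,0}$. Before constructing $D$, I record a genus equality valid for any fibration $g\colon Y\to D$ with connected fibers, $g|_{X_t}$ nonconstant, and $V^{1,0}\subseteq g^*H^{1,0}(D)$. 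On the one hand, the invariant cycles sequence forces $\iota^*$ to land in $H^1(X_t)^\rho$, so $(g|_{X_t})^*H^{1,0}(D)=\iota^*g^*H^{1,0}(D)\subseteq H^{1,0}(X_t)^\rho$; since pullback along the nonconstant map $g|_{X_t}$ is injective on $1$-forms, $g(D)\le d$. On the other hand $g^*$ is injective and $V^{1,0}\subseteq g^*H^{1,0}(D)$, so $g(D)\ge\dim V^{1,0}=d$; hence $g(D)=d$. When $d=1$, $A$ is an elliptic curve, so taking the Stein factorization $\beta=(D\to A)\circ(Y\xrightarrow{g}D)$ already supplies such a $g$ (that $g|_{X_t}$ is nonconstant follows since otherwise $g$ factors through $p$, forcing $V^{1,0}\subseteq W^{1,0}$, contrary to $V\cap W=0$).

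For $d=2$ the entire difficulty is to produce $g$ at all, equivalently to show $\beta\colon Y\to A$ is \emph{not} dominant. This is precisely the statement that $V^{1,0}$ is isotropic in $H^{1,0}(Y)$: if $\theta_1,\theta_2$ span $V^{1,0}$, then $\beta^*$ of a generator of the line $H^{2,0}(A)$ equals $\theta_1\wedge\theta_2$, so $\beta$ is generically finite if and only if $\theta_1\wedge\theta_2\neq0$. I expect this to be the main obstacle. The plan is to rule out that a Kodaira fibration can be generically finite over an abelian surface. The degree-one case is immediate, since $Y$ would be birational to $A$, contradicting that Kodaira fibrations are of general type. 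For higher degree I would exploit nonisotriviality of $p$: dominance of $\beta$ forces $\Omega^1_Y$ to be generically globally generated by the invariant forms $\theta_1,\theta_2$, hence the relative canonical bundle, whose restriction to a fiber is $K_{X_t}$, to be generated away from finitely many points by the \emph{fixed} pencil $H^{1,0}(X_t)^\rho$; tracking the resulting family of image curves $\beta(X_t)\subseteq A$, which all lie in a single homology class, I would argue that the Kodaira--Spencer class of $p$ must vanish, contradicting nonconstant moduli. Granting non-dominance, the Stein factorization of $\beta$ yields $g\colon Y\to D$ with $V^{1,0}\subseteq g^*H^{1,0}(D)$ and $g|_{X_t}$ nonconstant (same argument as before), so $g(D)=d=2$ by the genus equality.

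Finally, with $g\colon Y\to D$ of genus $d$ and $p\colon Y\to C$ in hand, I would assemble $\Phi=(g,p)\colon Y\to D\times C$ and verify the two conclusions. The genus equality gives $g^*H^{1,0}(D)=V^{1,0}$ (equal dimensions and one containment), so after conjugating $g^*H^1(D)=V$, while $p^*\colon H^1(C)\xrightarrow{\sim}W$; thus $\Phi^*$ carries $H^1(D)\oplus H^1(C)$ isomorphically onto $V\oplus W=H^1(Y)$, which is conclusion (2). For generic finiteness, choosing nonzero $\sigma\in H^{1,0}(D)$ and $\tau\in H^{1,0}(C)$ I compute $\Phi^*(\sigma\wedge\tau)=g^*\sigma\wedge p^*\tau$ with $g^*\sigma\in V^{1,0}$ and $p^*\tau\in W^{1,0}$, which is nonzero by Proposition \ref{LerayHirsch}; hence $\Phi$ pulls back a nonzero holomorphic $2$-form nontrivially and is therefore dominant, and since both surfaces have dimension two it is a branched cover, with $p=p_C\circ\Phi$ by construction, giving conclusion (1). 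Composing with $F_0$ produces the required $F\colon X\to D\times C$ and completes the argument.
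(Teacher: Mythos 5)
Your overall architecture --- reduce to the reduced case via Theorem \ref{ReducedBranched}, extract a second fibration $g\colon Y\to D$ from the invariant forms, and assemble $\Phi=(g,p)$ with finiteness coming from Proposition \ref{LerayHirsch} --- matches the paper in spirit, and your $d=1$ case (Stein factorization of $Y\to A$ with $A$ an elliptic curve, versus the paper's direct observation that $E\times C$ is already a smooth surface so $F'$ is a surjective finite map) is a harmless variant. The genuine gap is exactly the step you flag as the main obstacle: for $d=2$ you must show $\beta\colon Y\to A$ is not dominant, equivalently that $V^{1,0}$ is isotropic, and what you offer is a program rather than a proof. The chain ``dominance forces $K_{X_t}$ to be generated off finitely many points by the fixed pencil, the image curves $\beta(X_t)$ lie in one class, hence the Kodaira--Spencer class of $p$ vanishes'' has no justified link in it: the $X_t$ are (possibly high-degree) covers of the image curves $\beta(X_t)$, and both the image curves and the covering/branch data can vary in the family, so constancy of the algebraic equivalence class of $\beta(X_t)$ does not visibly constrain the moduli of $X_t$. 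As written, the $d=2$ case is unproved.

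The paper closes this case by an entirely different mechanism that never decides whether $\beta$ is dominant. Once $Y$ is reduced, $F'=\alpha\times f$ immerses the surface $Y$ as a divisor in the threefold $A\times C$, whose tangent bundle has total Chern class $1-p_C^*K_C$ because $TA$ is trivial. The normal bundle sequence gives $c_1(\nu)=-f^*K_C-c_1(Y)$ and $c_1(Y)c_1(\nu)+c_2(Y)=0$; combined with $c_1(Y)\cdot f^*K_C=-c_2(Y)$ (adjunction on a fiber, plus $c_2(Y)=\chi(C)\chi(X_t)$) this yields $c_1^2(Y)=2c_2(Y)$, i.e.\ $\sigma(Y)=0$ by the Hirzebruch signature theorem. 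Since a genuinely non-isotrivial Kodaira fibration has positive signature, the fibration $p\colon Y\to C$ must in fact be a product $D\times C$ with $D$ of genus $2$, and $X\to Y$ is the desired branched cover. In other words, the isotropy of $V^{1,0}$ that you are trying to establish directly is obtained in the paper as a consequence of this Chern-class computation; to complete your argument you should either import that computation or supply an actual proof of non-dominance, since the Kodaira--Spencer sketch does not currently yield one.
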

\begin{rmk}In general, $F$ will not be a double Kodaira fibration, let alone double \'etale (see \cite{CaRo09} for a definition).  For instance, if $\dim_\C H^{1,0}(X_t)^\rho=1$, then $D$ is an elliptic curve.  However, for a Kodaira fibration, the base curve must have genus at least 2.  Examples where $\dim_\C H^{1,0}(X_t)^\rho=1$ have been constructed by Flapan \cite{Fl17}.  
\end{rmk}
%
We also obtain the following group-theoretic corollary. As far as the author is aware, the maximal, branched Kodaira fibrations are the only known K\"ahler groups with infinite abelianization and which are residually finite, but not residually torsion-free nilpotent. Residual finiteness follows from the fact that $\Pi_h$ and hence $\Aut(\Pi_h)$ are both residually finite.  
\begin{corollary}\label{Nilpotent}With $X$ as in Theorem \ref{BranchedCriterion}, $\pi_1(X)$ is not residually nilpotent.
\end{corollary}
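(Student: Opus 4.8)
The plan is to identify the nilpotent residual $\gamma_\omega(G) := \bigcap_n \gamma_n(G)$ of $G=\pi_1(X)$ with the kernel of the map induced by the maximal branched covering $F:X\to D\times C$ of Theorem \ref{BranchedCriterion}, and to show this kernel is nontrivial. Write $P=\pi_1(D\times C)=\Pi_d\times\Pi_g$, where $\Pi_d=\pi_1(D)$ and $\Pi_g=\pi_1(C)$ (so $d=g(D)\le 2$), and set $\phi=F_*:G\to P$. Since $X$ and $D\times C$ are aspherical (Theorem \ref{HKK}, together with the fact that a product of positive-genus curves is a $K(\pi,1)$), throughout I may identify group homology with the homology of these spaces.

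First I would record the easy inclusion $\gamma_\omega(G)\subseteq\ker\phi$. Surface groups of genus $\ge 1$ are residually torsion-free nilpotent, and this is inherited by finite direct products, so $P$ is residually nilpotent and $\gamma_\omega(P)=1$. As $\phi(\gamma_n G)\subseteq\gamma_n P$ for every $n$, we get $\phi(\gamma_\omega G)\subseteq\gamma_\omega P=1$, giving the claim.

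Second I would show $\ker\phi\neq 1$. Because $f=p_C\circ F$, the covering $F$ carries the fiber $X_t=f^{-1}(t)$ into $p_C^{-1}(t)=D\times\{t\}$, so $\phi$ restricted to the fiber subgroup $\Pi_h\le G$ is the map $(g|_{X_t})_*:\Pi_h\to\Pi_d$ induced by the finite holomorphic map $g|_{X_t}:\Sigma_h\to D$ (here $g=p_D\circ F$). This map is genuinely ramified: were it unramified for generic $t$, the fibers of $f$ would be unramified covers of the \emph{fixed} curve $D$ of fixed degree, hence locally constant in the finite set of such covers and so all isomorphic, contradicting the nonconstant moduli of a Kodaira fibration. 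A ramified covering $\Sigma_h\to D$ with $d\le 2<h$ cannot be $\pi_1$-injective: for $d=1$ the target is abelian, while for $d=2$ injectivity would realize $\Pi_h$ as a finite-index subgroup of $\Pi_2$, necessarily of genus $n+1$, contradicting the Riemann--Hurwitz count $2h-2=2n+\deg R>2n$. Hence $\ker(\phi|_{\Pi_h})\neq 1$, and a fortiori $\ker\phi\neq 1$.

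The main step, and the principal obstacle, is the reverse inclusion $\ker\phi\subseteq\gamma_\omega(G)$. For this I would invoke Stallings' theorem: if $\phi$ induces an isomorphism on $H_1$ and an epimorphism on $H_2$, then $\phi$ induces isomorphisms $G/\gamma_n G\xrightarrow{\sim}P/\gamma_n P$ for all $n$, which at once forces $\ker\phi\subseteq\gamma_n G$ for every $n$, i.e. $\ker\phi\subseteq\gamma_\omega(G)$. Combined with the first step this gives $\gamma_\omega(G)=\ker\phi\neq 1$, so $G$ is not residually nilpotent. The hard part is verifying the hypotheses of Stallings' theorem. Rationally they are immediate: maximality of $F$ is exactly that $\phi$ is an isomorphism on $H^1(\,\cdot\,;\Q)$, and the transfer $\tau$ of the degree-$n$ branched cover satisfies $F_*\circ\tau=n\cdot\mathrm{id}$, so $\phi$ is onto on $H_2(\,\cdot\,;\Q)$; already this yields, via the rational form of Stallings' theorem, that the Malcev completions of $G$ and $P$ agree and hence that $G$ is not residually torsion-free nilpotent. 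Upgrading to the stronger integral conclusion requires showing that $H_1(X;\Z)$ is torsion-free and that maximality makes $\phi$ an honest isomorphism on $H_1(\,\cdot\,;\Z)$ and a surjection on $H_2(\,\cdot\,;\Z)$; this integral refinement, where one must examine the construction of $D$ and $F$ directly, is the crux of the argument.
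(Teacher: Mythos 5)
Your strategy is essentially the paper's: combine the maximal branched covering $F:X\rightarrow D\times C$ of Theorem \ref{BranchedCriterion} with Stallings' theorem and the residual nilpotence of a product of surface groups, and show that $F_*$ is not injective. But as written your argument does not close. You correctly observe that the rational form of Stallings' theorem controls only the rational lower central series, hence on its own rules out only residual \emph{torsion-free} nilpotence, and you then defer the integral upgrade (torsion-freeness of $H_1(X;\Z)$, integral isomorphism on $H_1$, integral surjectivity on $H_2$) as ``the crux'' without carrying it out. That is a genuine gap for the statement as phrased. The paper does not attempt that upgrade: it cites Stallings' Theorem 7.3 with the hypotheses that $F^*$ is an isomorphism on $H^1(-;\Q)$ and an injection on $H^2(-;\Q)$ (dual to your $H_1$-isomorphism and transfer-surjectivity on $H_2$), concludes that $F_*$ injects all nilpotent quotients $\Gamma/\Gamma^{(n)}\rightarrow\Pi/\Pi^{(n)}$ and $\Gamma/\Gamma^{(\infty)}\hookrightarrow\Pi/\Pi^{(\infty)}$, and then argues contrapositively: residual nilpotence of $\Gamma$ would force $F_*$ to be injective, hence (by finiteness of the degree of $F$) of finite index in $\Pi$, hence $X$ would be homotopy equivalent to a finite cover of $D\times C$ by asphericity --- a contradiction. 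In particular the paper never needs your exact identification $\gamma_\omega(G)=\ker\phi$; only the inclusion $\ker\phi\subseteq\gamma_\omega(G)$ supplied by Stallings is used, and that is where all the content lies. You should either adopt the paper's reading of Theorem 7.3 or state explicitly that what your rational argument proves is failure of residual torsion-free nilpotence.

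Your second step --- showing $\ker\phi\neq 1$ by restricting to a fiber and using ramification of $\Sigma_h\rightarrow D$ --- is a pleasant, more elementary alternative to the paper's finite-index-plus-asphericity contradiction, and either works. One small repair is needed in the genus-$2$ case: you conflate the degree of the branched cover $\Sigma_h\rightarrow D$ with the index of the image subgroup in $\Pi_2$. These coincide only once you know that the induced map from $\Sigma_h$ to the intermediate cover corresponding to $\mathrm{im}(\phi|_{\Pi_h})$ has degree one, which requires its own short argument (for instance, a nonconstant map of closed aspherical surfaces inducing a $\pi_1$-isomorphism must have degree $\pm 1$ by unimodularity of the intersection pairing); only then does Riemann--Hurwitz give the contradiction with ramification.
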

\begin{proof} For any group $G$, we denote by $G^{(n)}$ the $n$th term in the lower central series, and set $G^{(\infty)}=\bigcap_{i=1}^{\infty}G^{(n)}$. If $G$ is residually nilpotent then $G^{(\infty)}\cong 1$.  

Let $\Gamma=\pi_1(X)$ and $\Pi=\pi_1(D\times C)$. Observe that $F^*$ induces an isomorphism on $H^1(-;\Q)$ and an injection on $H^2(-;\Q)$. Therefore, by Stallings' theorem on lower central series with $\Q$-coefficients (\cite{St65}, Theorem 7.3), $F_*:\pi_1(X)\rightarrow \pi_1(D\times C)$ induces injections on all nilpotent quotients and an injection $\Gamma/\Gamma^{(\infty)} \hookrightarrow \Pi/\Pi^{(\infty)}$.  Now $\Pi$ is residually nilpotent since it is a product of surface groups, which are residually nilpotent.  If $\Gamma$ were residually nilpotent, then $F_*:\Gamma\hookrightarrow \Pi$ would be an injection.  Since $F$ has finite degree, this implies $\Gamma$ is a finite index subgroup of $\Pi$.  As $X$ and $D\times C$ are aspherical, we conclude that $X$ is homotopy equivalent to a finite cover of $D\times C$, a contradiction.  
\end{proof}

The rest of the section will be devoted to proving Theorems \ref{ReducedBranched} and \ref{BranchedCriterion}.  The strategy will be to construct a branched covering from $X$ onto another surface $Y$.  We then show that $Y$ is smooth and also admits the structure of a Kodaira fibration $p:Y\rightarrow C$.  However, $X$ and $Y$ will have isogenous Albanese varieties and the map $Y\hookrightarrow \Alb(Y)$ will be an immersion.

To construct $Y$, we first consider the canonical map from $X$ to its Albanese torus $\Alb(X)$. In sections \ref{MappingAlbanese} and \ref{MappingProduct} we work in greater generality and do not assume that $\dim_\C H^{1,0}(X_t)^\rho\leq2$, only that $\dim_\C H^{1,0}(X_t)^\rho\neq 0$. 
\subsection{Mapping to the Albanese}\label{MappingAlbanese}
Choose a basepoint $t_0\in C$ and a preimage $x_0\in f^{-1}(t_0)$ to define Albanese maps $\alpha_C$, $\alpha_X$ as in \S \ref{Albanese}. By the universal property, we obtain a commutative diagram \[\xymatrix@=40pt{
X\ar[d]_f\ar[r]^{\alpha_X}&\Alb(X)\ar[d]^{\alpha(f)}\\
C\ar[r]^{\alpha_C}&\Alb(C)
}\]
We remark that $\alpha(f)$ is a surjective submersion since $f:X\rightarrow C$ is, and by our choice of basepoint, $\alpha(f)$ is a homomorphism with respect to the group structures on $\Alb(X)$ and $\Alb(C)$.  

Let $A$ denote the kernel of $\alpha(f)$. By Poincar\'e's complete reducibility theorem (\cite{Mum08}, \S19 Theorem 1), there exists an abelian subvariety $Z\subseteq \Alb(X)$ such that $A+Z=\Alb(X)$ and $A\cap Z$ is finite.  Therefore there is a finite covering $q':A\times Z \rightarrow \Alb(X)$. The restriction of $\alpha(f)$ to $Z$ induces a finite covering $Z\rightarrow \Alb(C)$; hence $Z$ is isogenous to $\Alb(C)$, and there is an integer $n_Z$ such that the multiplication map $1\times n_Z:A\times \Alb(C)\rightarrow A\times Z$ is an isogeny.  Let $q=q'\circ (1\times n_Z)$ be the composite isogeny $q:A\times \Alb(C)\rightarrow \Alb(X)$.  From this description, it is clear that:
\begin{enumerate}
\item $q^*$ induces isomorphisms on $H^*$ with $\Q$-coefficients. 
\item The composition \[A\times\{0\}\hookrightarrow A\times\Alb(C)\xrightarrow{q}\Alb(X)\xrightarrow{\alpha(f)} \Alb(C)\] induces the zero map on $H^*$ with $\Q$-coefficients.
\item The composition \[\{0\}\times \Alb(C)\hookrightarrow A\times\Alb(C)\xrightarrow{q}\Alb(X)\xrightarrow{\alpha(f)}\Alb(C)\] induces an isomorphism on $H^*$ with $\Q$-coefficients.
\end{enumerate}

Let $p_A:A\times \Alb(C)\rightarrow A$ be the projection onto the first factor. We can build a map $\alpha:X\rightarrow A$ as follows.  Given $x\in \Alb(x)$, $q^{-1}(x)=\{x_1,\ldots, x_d\}$, where $d$ is the degree of $q$.  Define $r:\Alb(X)\rightarrow A\times\Alb(C)$ by \[r(x)=\sum_{i=1}^dx_i\]
Then we set $\alpha(x)=p_A\circ r\circ \alpha_X(x)$.   
\begin{proposition}\label{virtualSplit}$H^1(X;\Q)\cong \im(\alpha^*)\oplus \im( f^*)$ as weight-1 $\Q$-Hodge structures. 
\end{proposition}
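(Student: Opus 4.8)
The plan is to transport the Künneth decomposition of $H^1(A\times\Alb(C);\Q)$ to $H^1(X;\Q)$ along the composite isomorphism $\alpha_X^*\circ r^*$, and to match its two summands with $\im(\alpha^*)$ and $\im(f^*)$ respectively. I will use the standard fact that $\alpha_X^*\colon H^1(\Alb(X);\Q)\to H^1(X;\Q)$ and $\alpha_C^*\colon H^1(\Alb(C);\Q)\to H^1(C;\Q)$ are isomorphisms of $\Q$-Hodge structures, together with the Künneth splitting $H^1(A\times\Alb(C);\Q)=p_A^*H^1(A;\Q)\oplus \pi^*H^1(\Alb(C);\Q)$, where $\pi\colon A\times\Alb(C)\to\Alb(C)$ is the second projection; both summands are weight-$1$ Hodge substructures.

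First I would check that $r^*$ is an isomorphism on $H^1(-;\Q)$. Since $q$ is a homomorphism of degree $d$ and $r(x)=\sum_i x_i$ sums over the fiber $q^{-1}(x)$, one has $q\circ r=[d]$, multiplication by $d$ on $\Alb(X)$ (indeed $q(\sum_i x_i)=\sum_i q(x_i)=dx$); here $r$ is holomorphic, being a homomorphism of abelian varieties up to translation by the torsion point $\sum_{k\in\ker q}k$. As $[d]^*$ acts by $d$ on $H^1$, pulling back gives $r^*\circ q^*=d\cdot\id$ on $H^1(\Alb(X);\Q)$; since $q^*$ is an isomorphism by property (1), so is $r^*$, with $(q^*)^{-1}=\tfrac1d r^*$.

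Next I would locate the two images. The inclusions $A\times\{0\}\hookrightarrow A\times\Alb(C)$ and $\{0\}\times\Alb(C)\hookrightarrow A\times\Alb(C)$ induce on $H^1$ exactly the projections onto $p_A^*H^1(A;\Q)$ and $\pi^*H^1(\Alb(C);\Q)$. Hence properties (2) and (3) say that $(\alpha(f)\circ q)^*=q^*\circ\alpha(f)^*$ has zero component in the first summand and maps $H^1(\Alb(C);\Q)$ isomorphically onto the second; that is, $q^*\alpha(f)^*$ is an isomorphism onto $\pi^*H^1(\Alb(C);\Q)$. Applying $(q^*)^{-1}=\tfrac1d r^*$ shows $\im(\alpha(f)^*)=r^*\!\left(\pi^*H^1(\Alb(C);\Q)\right)$. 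Combining with $f^*\circ\alpha_C^*=(\alpha_C\circ f)^*=(\alpha(f)\circ\alpha_X)^*=\alpha_X^*\circ\alpha(f)^*$ and the surjectivity of $\alpha_C^*$, I get $\im(f^*)=\alpha_X^*r^*\!\left(\pi^*H^1(\Alb(C);\Q)\right)$. On the other hand, $\alpha=p_A\circ r\circ\alpha_X$ gives $\alpha^*=\alpha_X^*\circ r^*\circ p_A^*$, so $\im(\alpha^*)=\alpha_X^*r^*\!\left(p_A^*H^1(A;\Q)\right)$.

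Finally, since $\alpha_X^*\circ r^*$ is an isomorphism carrying the internal direct sum $p_A^*H^1(A;\Q)\oplus \pi^*H^1(\Alb(C);\Q)$ onto $H^1(X;\Q)$, the two images are complementary and $H^1(X;\Q)=\im(\alpha^*)\oplus\im(f^*)$. All maps in sight ($\alpha_X^*$, $r^*$, $q^*$, $\alpha(f)^*$, $p_A^*$, $f^*$) are morphisms of $\Q$-Hodge structures and the Künneth summands are Hodge substructures, so this is a splitting of weight-$1$ $\Q$-Hodge structures. The only non-formal points are verifying that $r$ is a well-defined holomorphic map with $q\circ r=[d]$ (so that $r^*$ inverts $q^*$ up to the scalar $d$) and correctly reading properties (2)--(3) as the vanishing and isomorphism of the two Künneth components; once these are in place the argument is linear-algebraic bookkeeping, so I expect no serious obstacle beyond this identification.
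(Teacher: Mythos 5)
Your proposal is correct and follows essentially the same route as the paper: both observe that $q\circ r=[d]$ makes $r$ an isogeny so that $\alpha_X^*\circ r^*$ is an isomorphism of Hodge structures, identify the two K\"unneth summands of $H^1(A\times\Alb(C);\Q)$ with $\im(p_A^*)$ and $\im(q^*\circ\alpha(f)^*)$ via properties (2) and (3), and transport the splitting through the commutative diagram. You simply spell out the bookkeeping (well-definedness and holomorphy of $r$, the identity $r^*\circ q^*=d\cdot\id$, the lower-square compatibility $f^*\circ\alpha_C^*=\alpha_X^*\circ\alpha(f)^*$) that the paper leaves implicit.
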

\begin{proof} Observe that $r$ is an isogeny since clearly $q\circ r$ is just multiplication by $d$.  In particular, $\alpha_X^*\circ r^*$ is an isomorphism of $\Q$-Hodge structures. On the other hand, $H^1(A\times \Alb(C);\Q)$ can be canonically identified with $H^1(A;\Q)\oplus H^1(\Alb(C);\Q)$ by projection. Now we have the following commutative diagram where all cohomology groups are with $\Q$-coefficients:
\[\xymatrix@=40pt{
H^1(A)\ar[r]^{p_A^*}\ar[d]_{\alpha^*}& H^1(A\times\Alb(C))\ar@/^/[d]^{r^*}&H^1(\Alb(C))\ar[l]_{p_{\Alb(C)}^*}\\
H^1(X)&H^1(\Alb(X))\ar[u]^{q^*}\ar[l]_{\alpha_X^*}&\\
H^1(C)\ar[u]^{f^*}&H^1(\Alb(C))\ar[l]_{\alpha_C^*}\ar[u]^{\alpha(f)^*}&
}\]
By $(2)$ and $(3)$ above, we see that $\im(p_A^*)\cap\im(q^*\circ\alpha(f)^*)=0$ and $\im(p_A^*)+\im(q^*\circ\alpha(f)^*)=H^1(A\times\Alb(C);\Q)$. From the commutativity of the lower square and the fact that $\alpha_X^*\circ r^*$ is an isomorphism, the proposition follows. 
\end{proof}

From the proposition, instead of choosing an arbitrary splitting of $\iota^*$, we can use $V=\im(\alpha^*)$ and $W=\im(f^*)$. We remark that since $X$ is a surface, the image of $\alpha$ is either 1-dimensional or 2-dimensional.  In fact, it is not difficult to show that $V$ comes from a curve if and only if the image of $X$ in $A$ is 1-dimensional, in which case the image of $X$ will be a smooth curve in $A$.

\subsection{Mapping into a product}\label{MappingProduct}
Consider the product map $F'=\alpha\times f:X\rightarrow A\times C$, and let $Y'=\im(F')$.  By a slight abuse of notation, we will use $F'$ either to refer to the map to $A\times C$ or to $Y'\subset A\times C$ Projecting onto $C$, we see that also $Y'$ fibers over $C$ since we have a commutative diagram
\[\xymatrix{
X\ar[dr]_f\ar[r]^{F'}&Y'\ar[d]^{p'}\\
& C
}\]

By Chow's theorem (see for example, \cite{GH94}), $Y'$ is an algebraic subvariety of $\Alb(X)$.  From Proposition \ref{virtualSplit}, the composition \[X\xrightarrow{F'}A\times C \rightarrow A\times\Alb(C)\] induces an isomorphism on $H^1$ with $\Q$-coefficients. As $F'$ factors through $Y'$, the same is true of $F':X\rightarrow Y'$.  First, we claim that $Y'$ is generically smooth. We have
\begin{lemma}\label{mostlySmooth} The map $F':X\rightarrow Y'$ is finite. Away from a finite set of points: 
\begin{enumerate}
\item the ramification locus is transverse to the fibers of $f$, and
\item $Y'$ is locally the image of a smooth surface immersed in $A\times C$.
\end{enumerate}
\end{lemma}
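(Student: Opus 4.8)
The plan is to prove the statement in Lemma~\ref{mostlySmooth} by analyzing the map $F'=\alpha\times f:X\rightarrow Y'\subseteq A\times C$ fiberwise over $C$, and then localizing the singularity analysis to the branch locus. First I would establish finiteness of $F':X\rightarrow Y'$. Since $F'$ is a holomorphic map between compact surfaces that induces an isomorphism on $H^1(-;\Q)$ (by Proposition~\ref{virtualSplit} and the remarks following it), the image $Y'$ has the same first Betti number as $X$; in particular $Y'$ is a genuine surface and not a curve, so $F'$ cannot contract any fiber of $f$ to a point. Because $f=p'\circ F'$ and both $f$ and $p'$ have $1$-dimensional (curve) fibers, the restriction of $F'$ to each fiber $X_t$ is a holomorphic map $X_t\rightarrow (Y')_t$ between complete curves; a nonconstant such map is automatically finite, so $F'$ has finite fibers and hence is a finite morphism by compactness and properness.

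Next I would address transversality of the ramification locus to the fibers. The ramification locus $R\subseteq X$ is the vanishing locus of the Jacobian of $F'$, an analytic subset. The key observation is that the holomorphic $1$-forms spanning $V^{1,0}=\im(\alpha^*)$ restrict nontrivially to a generic fiber $X_t$: indeed, by the invariant cycles theorem the restriction $\iota^*$ maps $V^{1,0}$ isomorphically onto $H^{1,0}(X_t)^\rho\neq 0$, so the Albanese coordinate $\alpha$ is nonconstant on a generic fiber. This means that for generic $t$, the restriction $\alpha|_{X_t}:X_t\rightarrow A$ is a nonconstant (hence finite) map of curves, whose ramification points are isolated in $X_t$. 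The ramification locus $R$ of $F'$ meets such a fiber $X_t$ only where $\alpha|_{X_t}$ ramifies (since the $C$-coordinate is automatically a submersion), giving finitely many points per fiber. I would then argue that $R$ is therefore either a finite union of points or a curve mapping finite-to-one onto $C$, and that after removing the finitely many fibers over which $\alpha|_{X_t}$ degenerates or $R$ becomes tangent to the fiber, $R$ is transverse to the fibers of $f$. The transversality statement amounts to showing $R$ has no vertical components (components contained in a single fiber) except over finitely many $t$, which follows because a vertical component would force $\alpha$ to be constant on a positive-dimensional family, contradicting the generic nonconstancy just established.

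For the second conclusion, that $Y'$ is locally the image of a smooth immersed surface away from finitely many points, I would combine finiteness with the fiberwise picture. Since $F'$ is finite and $X$ is smooth, $Y'$ is a possibly singular algebraic surface in $A\times C$ (it is algebraic by Chow's theorem, as already noted). The singularities of $Y'$ come from two sources: points where $F'$ ramifies, and points where distinct sheets of $F'$ meet (nodes or worse). Over the locus where the ramification is transverse to the fibers, I would show that $Y'$ is, locally near the image, cut out smoothly in the fiber direction and parametrized smoothly in the base direction, so that $X\rightarrow Y'$ is locally the normalization and $Y'$ is the image of an immersion. The genuinely bad points — where sheets become tangent, where ramification fails transversality, or where the local multiplicity jumps — form a proper analytic subset of $Y'$, and since $F'$ is finite they pull back to a proper analytic subset of $X$. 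I expect the main obstacle to be controlling this bad locus: a priori the self-intersection curve of $Y'$ (the image of points with multiple preimages) could be one-dimensional, so ``locally the image of a smooth immersed surface'' must be interpreted as allowing such immersed double curves while excluding the finitely many triple points, pinch points, and tangencies. The careful step is verifying that these excluded phenomena are genuinely finite in number, which I would reduce to the statement that the map $\alpha$ restricted to the generic fiber is a map of curves whose critical and multiple-point structure varies algebraically in $t$, so degenerations occur over finitely many $t\in C$ and at finitely many points of each such fiber.
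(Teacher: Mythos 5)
Your outline tracks the paper's proof (finiteness from nonconstancy of $\alpha$ on fibers, transversality of the ramification divisor from nonvanishing of the restricted invariant forms, then a local analysis near ramification points), but two steps that carry the real weight are either flawed or missing. First, the finiteness argument as written rests on a non sequitur: ``$Y'$ is a genuine surface, so $F'$ cannot contract any fiber.'' A surjection of surfaces can perfectly well contract an individual curve (a blowdown does exactly this), so the two-dimensionality of the image does not rule out collapsing one special fiber. What is actually needed, and what the paper uses, is that $\alpha$ is nonconstant on \emph{every} fiber, not just a generic one: since $f$ is a submersion with no singular fibers, all fibers are homologous and for $\theta\in V^{1,0}$ the class $\theta\wedge\overline{\theta}$ pairs positively with the fiber class, so $\iota_t^*$ cannot kill $V^{1,0}$ for any $t$. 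Your invocation of the invariant cycles theorem is hedged to ``generic $t$,'' which leaves open precisely the degenerate fibers that would destroy finiteness.

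Second, the actual content of conclusion (2) --- that near a smooth point of $R$ transverse to the fiber, the image $F'(V)$ is a \emph{smooth} surface --- is stated as a goal (``cut out smoothly in the fiber direction and parametrized smoothly in the base direction'') but never argued. This is where the paper does its work: transversality gives a local section of $R\to C$ and a product model $V\cong\D^2\times\D^2$ with $V\cap R=\D^2\times\{0\}$; then $F'$ restricted to $V\setminus R$ is a fiber-preserving covering of a space with $\pi_1\cong\Z$, so $F'|_V$ is a fiberwise cyclic branched cover, and the image is smooth by Mumford's topological criterion. Without some version of this (even the bare-hands observation that the image of a disk under $z\mapsto z^k$ is again a disk, varying holomorphically with $t$), you have no control over whether the image sheet develops a pinch point or worse at a ramification point. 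Relatedly, your concern about double curves, triple points, and tangent sheets is beside the point for statement (2): a union of smooth embedded sheets is always locally the image of an immersion, so only the failure of an individual sheet to be smooth must be excluded, and that occurs exactly at the non-smooth points of $R$ and the finitely many tangencies of $R$ with fibers.
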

\begin{proof} The fibers of $F'$ are either 0 or 1-dimensional. Suppose for some $y\in Y'$, $F'^{-1}(y)$ contains a curve $B$. Let $t=p'(y)$. Since $B\subseteq F'^{-1}(y)\subseteq f^{-1}(t)=X_{t}$, we see that $B=X_t$ since $X_t$ is irreducible.  But then $X\rightarrow Y'\hookrightarrow A\times C$ cannot induce an isomorphism on $H^1$.  This contradiction proves the first statement in the lemma.  

By the previous paragraph, we see that $F'$ is a finite degree branched covering and the fibers of $p':Y'\rightarrow C$ are connected, possibly singular algebraic curves. We next analyze what happens when $F'$ is singular.  

Let $R\subset X$ be the ramification divisor.  Then $R$ is an algebraic subvariety of $X$ by Chow's theorem again and hence generically smooth. Consider the projection $p_C:A\times C\rightarrow C$ and the composition $X\xrightarrow{F'} A\times C\xrightarrow{p_C} C$.  Then since $p_C\circ F'=f$, we have that $\ker(DF')\subseteq \ker(Df)$. Since the latter is 1-dimensional everywhere, the former is at most 1-dimensional.  For $x_0\in R$, we must have that $\ker(DF')=\ker(Df)$ is exactly 1-dimensional.  
\begin{claim} $F'$ is not singular along a fiber of $f$.  
\end{claim}
\begin{proof} By definition, $Df$ vanishes along the vertical tangent bundle.  Since $F'=\alpha\times f$, to prove the claim it suffices to show that $D\alpha$ does not identically vanish along the tangent bundle of any fiber of $f$. This is equivalent to showing that for any fixed fiber $X_t$, the induced map on the cotangent space $D^*(\alpha\circ\iota):T^*A\rightarrow T^*X_t$ is does not vanish. But for any $\theta\in \im(\alpha^*)$, we know that $\theta$ restricts to $X_t$ as a nontrivial holomorphic 1-form. Therefore $\iota^*(\theta)$ has only finitely many zeroes and does not vanish on $T^*X_t.$  
\end{proof}

We conclude that $R$ does not contain any fiber.  In fact, for each fixed $t\in C$, the above shows that $R\cap X_t$ is the set of common zeroes of the restriction of $\im(\alpha^*)$ to $X_t$. Next we show that locally $Y'$ is the immersed image of a complex surface with only has isolated singularities.  The singularities occur only at the image of nonsmooth points of $R$ or where $R$ is tangent to a fiber $X_t$.
\begin{claim} If $R$ is smooth at a point $x_0\in X_{t_0}$ and not tangent to $X_{t_0}$, then there exists a neighborhood $V$ of $x_0$ on $R$ which is transverse to the fibers of $f$, and such that the $F'(V)$ is a smooth surface embedded in $A\times C$.  
\end{claim}    
\begin{proof}If $x_0$ is a smooth point and $R$ is not tangent to $X_{t_0}$ at $x_0$, by the preceding discussion this means exactly that $T_{x_0}R$ exists and projects isomorphically onto $T_{t_0}C$. By the holomorphic inverse function theorem, there exists a neighborhood $U\ni t_0$ and a neighborhood $V\ni x_0$ and a section $\sigma:U\rightarrow V\cap R$. In particular, $V\cap R$ is transverse to the fibers of $f$. We may choose $V$ to be fiber-preserving $C^\infty$-diffeomorphic to $\D^2\times \D^2$ where $\D^2\times \{0\}$ corresponds to $V\cap R$. Setting $V'=V\setminus V\cap R$, we see that $F'|_{V'}$ is a covering map, preserving the fibers, and moreover, $F'|_{V\cap R}$ is a fiber-preserving diffeomorphism, as $p_2\circ F'\circ \sigma=\id_U$. Since $\pi_1(V')\cong \Z$, we conclude that $F'|_V$ is a fiberwise branched cover, branched along $V\cap R$.  Hence, the image $F'(V)$ is smooth by Mumford's topological criterion \cite{Mum61}.
\end{proof}
The proof of the claim obviously implies that $Y'$ is the immersed image of surface which is smooth except possibly where $R$ is not smooth or where $R$ is tangent to the fibers of $f$. Both of these possibilities only occur at finitely many isolated points.  
\end{proof}

In the case where $\dim_\C H^{1,0}(X_t)^\rho=1$, then $A=E$ an elliptic curve, and $E\times C$ is a smooth surface. Hence, $F'$ is onto and 
\begin{corollary}\label{ellipticCase} If $\dim_\C H^{1,0}(X_t)^\rho=1$, then $F':X\rightarrow E\times C$ is a finite-degree branched covering inducing an isomorphism on $H^1$ with $\Q$-coefficients.
\end{corollary}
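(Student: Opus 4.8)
The plan is to observe that the dimension hypothesis forces the abelian variety $A$ to be one-dimensional, and then to upgrade the generic smoothness from Lemma \ref{mostlySmooth} to a global statement using that $E\times C$ is already smooth. First I would compute $\dim_\C A$. Since $\Alb(X)$ has dimension $\dim_\C H^{1,0}(X)=\dim_\C V^{1,0}+\dim_\C W^{1,0}=d+g(C)$ while $\Alb(C)$ has dimension $g(C)$, and $\alpha(f):\Alb(X)\to\Alb(C)$ is a surjective homomorphism with kernel $A$, it follows that $\dim_\C A=d=1$. A one-dimensional abelian variety is an elliptic curve, so $A=E$ and the target $E\times C$ is a product of smooth curves, hence a smooth projective surface, which is the point that makes the argument clean.

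Next I would show that $F'$ is surjective, i.e.\ that $Y'=E\times C$. By Lemma \ref{mostlySmooth} the map $F'=\alpha\times f$ is finite, so for each $t\in C$ the fiber $p'^{-1}(t)=F'(X_t)$ is a one-dimensional subvariety of $E\times\{t\}\cong E$. Concretely $F'|_{X_t}$ is $x\mapsto(\alpha(x),t)$, and $\alpha|_{X_t}:X_t\to E$ is nonconstant because a nonzero $\theta\in V^{1,0}$ restricts to a nontrivial holomorphic $1$-form on $X_t$ (this is exactly the point used in the proof of the claim in Lemma \ref{mostlySmooth}). A nonconstant holomorphic map between compact Riemann surfaces is surjective, so $F'(X_t)=E\times\{t\}$, and letting $t$ vary gives $Y'=E\times C$. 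Thus $Y'$ is the smooth surface $E\times C$ and $F'$ is a finite-degree branched covering onto it.

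For the cohomological statement I would invoke Proposition \ref{virtualSplit}. With $A=E$, the composition $X\xrightarrow{F'}E\times C\xrightarrow{\id\times\alpha_C}E\times\Alb(C)$ induces an isomorphism on $H^1(-;\Q)$, and the second map is an isomorphism on $H^1$ precisely because $\alpha_C^*:H^1(\Alb(C);\Q)\to H^1(C;\Q)$ is. Hence $F'^*:H^1(E\times C;\Q)\to H^1(X;\Q)$ is an isomorphism, which is conclusion of the corollary.

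Since all of the substantive analytic work---the finiteness of $F'$ and the control of its ramification locus---is already contained in Lemma \ref{mostlySmooth}, I do not expect a serious obstacle here. The only thing to verify carefully is that the image is genuinely all of $E\times C$ rather than a proper subvariety, which is exactly the fiberwise surjectivity established above. The dimension bookkeeping identifying $A$ as an elliptic curve is what renders the target smooth \emph{a priori}, so that no resolution of singularities of $Y'$ is required and one lands on an honest product of curves.
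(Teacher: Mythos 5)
Your proposal is correct and follows essentially the same route as the paper, which simply notes that $\dim_\C A=1$ forces $A=E$ to be an elliptic curve so that $E\times C$ is already smooth, that the finite map $F'$ must then be onto, and that the $H^1$ isomorphism is exactly what Proposition \ref{virtualSplit} provides via the composition $X\to E\times C\to E\times\Alb(C)$. Your fiberwise surjectivity argument (via nonconstancy of $\alpha|_{X_t}$) is a slightly more explicit substitute for the paper's implicit dimension count, but both rest on the same input from Lemma \ref{mostlySmooth}.
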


This proves Theorem \ref{BranchedCriterion} for the case when $\dim H^{1,0}(X_t)^\rho=1$.  In general, $\dim_\C(A)>1$, and we do not know that the image is smooth.  However, we will show that it is the image of a smooth Kodaira fibration immersed in $A\times C$.  To do this, we show that the normalization of $Y'$ is smooth. Since $Y'$ is projective, it has a unique normalization $Y$ together with a finite map $\tau:Y\rightarrow Y'$. As $X$ is smooth, it is normal and hence the map $F'$ factors as $\tau\circ F$ for some map $F:X\rightarrow Y$.  It is well-known that $Y$ only has isolated singularities. 

\begin{lemma}\label{SmoothImage}$Y$ is smooth.  
\end{lemma}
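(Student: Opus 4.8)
The plan is to invoke Mumford's topological criterion once more: since $Y$ is normal with only isolated singularities, it suffices to show that the link $L$ of each singular point has trivial local fundamental group, for then by \cite{Mum61} $Y$ is a manifold, hence (being normal) smooth, at that point.

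First I would pin down where the singularities can occur. By Lemma \ref{mostlySmooth}, away from a finite set $S\subset Y'$ the variety $Y'$ is locally the image of a smooth surface immersed in $A\times C$; at such points $Y'$ is either smooth or carries ordinary self-intersections coming from distinct sheets of the immersion. The normalization $\tau:Y\to Y'$ separates these sheets, so $Y$ is smooth over $\tau^{-1}(Y'\setminus S)$, and $\operatorname{Sing}(Y)\subseteq\tau^{-1}(S)$ is a finite set of isolated normal surface singularities. Thus the problem becomes entirely local around the finitely many points of $X$ where the ramification divisor $R$ fails to be smooth or is tangent to a fiber.

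Next I would exploit the fibration to describe each link. Fix $y\in\operatorname{Sing}(Y)$ and set $t_0=p(y)$. Choosing a small disc $U\ni t_0$ so that $y$ is the only singular point of $Y$ lying over $U$, the submersion $f$ trivialises $f^{-1}(U)\cong\Sigma_h\times U$ as a smooth bundle, and over the punctured disc $U^*=U\setminus\{t_0\}$ the induced map $p:p^{-1}(U^*)\to U^*$ is a smooth family of curves. The key structural input from the proof of Lemma \ref{mostlySmooth} is that $F$ is a fibrewise branched covering branched along $R$: away from $t_0$ the fibrewise branch points are the finitely many common zeros of $\iota^*(\im\alpha^*)$ restricted to $X_s$, and as $s\to t_0$ these collide, which is precisely the tangency of $R$ with $X_{t_0}$. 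I would then compute $\pi_1(L)$ as the fundamental group of the boundary of a neighbourhood fibred over $\partial U\simeq S^1$: it is generated by meridians of the fibrewise branch locus together with the loop encircling $t_0$. The meridians become trivial in $Y$ because the local cover is cyclic and the branch points are filled in by the covering, while the monodromy relation around $t_0$ kills the remaining generator once the colliding branch points are placed in normal form; concretely one aims to show that the normalized local model is the smoothing $xy=s$ of a node, whose total space is smooth and whose link is $S^3$.

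The main obstacle will be exactly this last local computation at the points where the fibrewise branch points collide or where $R$ is singular. There one must verify that passing to the normalization genuinely resolves the singularity of $Y'$, i.e. that the image singularity is non-normal (a pinch point, or a degenerating node) rather than a genuine normal surface singularity such as an $A_n$-point, for which normalization would change nothing. I expect to handle this by choosing adapted local holomorphic coordinates putting the colliding branch points into the standard family $z^2=s$ (or its higher-multiplicity analogues, dictated by the local degree of $\alpha|_{X_{t_0}}$), computing the resulting image and its normalization explicitly, and reading off that $\pi_1(L)=1$; the Kodaira fibration hypotheses enter to guarantee that only these mild degenerations occur.
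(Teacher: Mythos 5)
Your reduction to finitely many isolated normal singularities of $Y$ is fine and matches the paper, but the heart of the lemma --- the local analysis at those points --- is not actually carried out in your proposal; it is replaced by a hoped-for normal form ($xy=s$, or ``higher-multiplicity analogues'') whose validity is exactly the thing that needs proving. You acknowledge this yourself, but the gap is real: nothing in your sketch rules out that the normalization has, say, an $A_n$-singularity, whose link is a lens space with nontrivial $\pi_1$, in which case Mumford's criterion would fail to apply. The assertion that ``the monodromy relation around $t_0$ kills the remaining generator'' is unsubstantiated --- for a general degenerating family of fibrewise branched covers the boundary of the fibered neighbourhood is a mapping torus whose fundamental group has no reason to vanish. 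The statement ``the Kodaira fibration hypotheses enter to guarantee that only these mild degenerations occur'' is precisely the missing argument, and you never identify the mechanism by which the submersion hypothesis on $f$ enters.

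The paper's proof supplies exactly this mechanism, and does so without ever analyzing the branching geometry. First, Brieskorn's theorem (a finite surjection from a smooth surface onto a normal surface forces quotient singularities) gives \emph{a priori} that each singular point of $Y$ has a local model $\Delta/G$ with $G\leq \U(2)$ finite acting freely off the origin --- so one already knows the link is $S^3/G$ and only needs $G=1$. Then the map $F$ lifts through the cover $\Delta^*\to V^*$ (the local complement $U^*$ upstairs being simply connected) and extends over the puncture by Hartogs, producing a $G$-equivariant holomorphic germ $\widehat p:\Delta\to C$ with $\widehat p\circ\widehat F=f$; the chain rule and the fact that $f$ is a \emph{submersion} force $D\widehat p(0)\neq 0$. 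Comparing the linear term of $\widehat p$ with its pullback under a generator $\mathrm{diag}(\omega,\omega^k)$ of $G$ then forces $\omega=1$. This is where the Kodaira fibration hypothesis does its work, and it is the step your link-computation strategy would have to reconstruct by hand. Your approach is not unsalvageable in principle, but as written it assumes the conclusion of the hard local step rather than proving it.
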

\begin{proof}
Write $p=p'\circ\tau:Y\rightarrow C$. A result of Brieskorn (\cite{Br67}, Satz 2.8) states that if $g:W\rightarrow Z$ is a finite, surjective morphism from a smooth surface $W$ to a normal surface $Z$, then $Z$ only has quotient singularities. Thus, $Y$ has isolated quotient singularities, which are classified by finite subgroups of $\U(2)$. 

Let $y_0\in Y$ be a singular point.  There exists an open subset $V\ni y_0$ which can be identified with $\Delta/G$ for some finite group $G\leq \U(2)$, where $\Delta\subset \C^2$ is an open ball centered on the origin.  Let $\pi$ be the projection $\pi:\Delta\rightarrow V$ and denote by $\widehat{p}$ the composition $p\circ \pi$.  Choose $x_0\in X$ mapping onto $y_0$ and let $t_0=f(x_0)=p(y_0)$ be their common image point in $C$. Now let $U\ni x_0$ be a fibered neighborhood of $x_0$ satisfying: 
\begin{enumerate}
\item $x_0$ is the only preimage of $y_0$ contained in $U$,
\item $U$ projects to $W'\subseteq W$ under $f$,
\item $F(U)\subseteq V$.  
\end{enumerate}
Topologically, $U$ is a trivial $\D^2$-bundle over $\D^2$.  

As $y_0$ is an isolated singularity, $G$ acts freely on $\Delta^*=\Delta\setminus \{0\}$, which makes $\pi:\Delta^*\rightarrow V^*=V\setminus\{y_0\}$ into the universal covering. On $\Delta^*$, the composition $\widehat{p}=\pi\circ p$ is a smooth submersion which is $G$-equivariant.  By Hartog's theorem, $\widehat{p}$ extends to a \emph{holomorphic} function $\widehat{p}:\Delta\rightarrow W$. Since $U^*=U\setminus\{x_0\}$ is simply connected, $F:U^*\rightarrow V^*$ lifts to a holomorphic map $\widehat{F}:U^*\rightarrow \Delta^*$. Applying Hartog's theorem again, $\widehat{F}$ extends to a holomorphic map $\widehat{F}:U\rightarrow \Delta$.  The two holomorphic functions $\widehat{p}\circ \widehat{F}$ and $f$ agree on an open subset of $U$, namely $U^*$, hence by the uniqueness of analytic continuation, they agree on all of $U$.   
Then we have a commutative diagram:\[
\xymatrix{
 &\Delta \ar[d]^\pi\ar@/^1.5pc/[dd]^{\widehat{p}} \\
U \ar[r]^{F} \ar[ur]^{\widehat{F}} \ar[dr]^f& V\ar[d]^{p}\\
& W
}\]

By chain rule, we see that $\widehat{p}$ is a submersion at $0$, since $f=\widehat{F}\circ \widehat{p}$ is a submersion $x_0$ and $\widehat{F}$ is holomorphic on a neighborhood of $0\in \Delta$.  
The key observation is that $\widehat{p}$ is $G$-equivariant. In particular, the fiber $\Delta_{0}$ of $\widehat{p}$ above $t_0$ is a smooth curve, invariant under the action of $G$.  Let $D\subseteq \Delta_0$ be an open disk neighborhood of $0$.  Set $D_0=\bigcap_{g\in G}g.D$.  Then $D_0$ is a neighborhood of $0$ which is $G$-invariant. Since $G$ fixes $0$ and acts freely on $\Delta^*$, we see that $G$ also acts freely on $D_0^*$.  We conclude that $G$ is finite cyclic.  It is easy to see that $G$ is conjugate to a diagonal subgroup generated by  \[\text{diag}(\omega,\omega^k)=\left(\begin{array}{cc}
\omega &0\\
0 & \omega^k
\end{array}\right)\]
where $\omega=e^{\frac{2\pi i}{n}}$ and $(n,k)=1$. We now compute $D\widehat{p}(0)$ in a different way.  Let $(x,y)$ be local coordinates on $\Delta$ so that the action of $G$ has the above form, and let $w$ be a local coordinate near $t_0\in W$, centered on $t_0$.  Since $\widehat{p}$ is analytic at $0$, on a neighborhood  of $0$ we can write \[w=\widehat{p}(x,y)= ax +by + \text{higher order terms},\] where at least one of $a,b\neq 0$. Since $\widehat{p}$ is $G$-equivariant,  we obtain \[\widehat{p}(x,y)=\widehat{p}(\omega x, \omega^ky)= (a\omega )x +(b\omega^k)y+ \text{higher order terms}\] Setting these two power series equal to each other, we deduce that $\omega=1$ and $G$ is trivial.  Thus, $y_0$ is a smooth point, as desired.
\end{proof}

From the lemma, we have that $p$ is a submersion since $f=p\circ F$ is a submersion. In particular, $Y$ also has the structure of a Kodaira fibration. We record this in 
\begin{proposition}\label{SmoothNormal} The normalization $Y$ is smooth and $p:Y\rightarrow C$ is a submersion. 
\end{proposition}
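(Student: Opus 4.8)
The plan is to split the statement into its two assertions, each of which is now within reach. Smoothness of $Y$ is precisely Lemma \ref{SmoothImage}, so that half requires no further argument. The remaining task is to see that $p:Y\to C$ is a submersion, and I would deduce this from the factorization $f=p\circ F$ by a chain-rule argument, after first checking that $F:X\to Y$ is surjective.

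For surjectivity of $F$, I would use that $\tau:Y\to Y'$ is the normalization, hence finite and birational. Since $F'=\tau\circ F$ maps onto $Y'=\im(F')$, and $\tau$ restricts to an isomorphism over a dense open $U'\subseteq Y'$, any point of $U'$ is hit by $F'$, and its unique $\tau$-preimage must then lie in the image of $F$. Thus $F(X)$ contains the dense open set $\tau^{-1}(U')$; being the image of the compact $X$ it is also closed, so $F(X)=Y$.

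With surjectivity in hand, I would fix $y\in Y$, choose $x\in F^{-1}(y)$, and differentiate: since $Y$ is smooth, the chain rule gives $Df_x=Dp_y\circ DF_x$ as maps of holomorphic tangent spaces. As $f$ is a submersion, $Df_x$ surjects onto the one-dimensional tangent space $T_{f(x)}C$, and a composite cannot be surjective unless its outer factor is; hence $Dp_y$ is surjective. Since $y$ was arbitrary, $p$ is a submersion at every point, and $Y$ thereby inherits the structure of a Kodaira fibration over $C$.

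I do not anticipate a serious obstacle, as the essential difficulty was already absorbed into the proof of smoothness (Lemma \ref{SmoothImage}), where the potential quotient singularities were excluded through the $G$-equivariance of the local projection $\widehat{p}$. The only point warranting care is the surjectivity of $F$: without it one would conclude merely that $p$ is a submersion along $F(X)$ rather than on all of $Y$, and it is exactly the birationality of the normalization $\tau$ that upgrades the conclusion to the whole of $Y$.
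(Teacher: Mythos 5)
Your proof is correct and follows essentially the same route as the paper, which likewise obtains smoothness directly from Lemma \ref{SmoothImage} and then deduces that $p$ is a submersion from the factorization $f=p\circ F$ via the chain rule. The surjectivity of $F$, which you verify carefully through the finiteness and birationality of the normalization $\tau$, is left implicit in the paper, so your added check is a harmless (and welcome) elaboration rather than a departure.
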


We can now prove Theorem \ref{ReducedBranched}:
\begin{proof}Given $f:X\rightarrow C$ we have constructed $F:X\rightarrow Y$, where $p:Y\rightarrow C$ is also a Kodaira fibration and $f=p\circ F$ by Proposition \ref{SmoothNormal}. As $F'$ is an isomorphism on $H^1$ with $\Q$-coefficients, so is $F$.  Since $Y$ is the normalization of $Y'$, $\tau:Y\rightarrow A\times C$ is an immersion and hence $F'$ is an immersion if and only if $F$ is.  Thus, if $F'$ is not an immersion, then $F:X\rightarrow Y$ is a nontrivial branched covering and the genus of the fibers of $p$ are strictly smaller than those of $f$.  To see $Y$ is reduced, consider the composition $Y\rightarrow A\times C\rightarrow A\times \Alb(C)$. The second map is an embedding since $C\hookrightarrow \Alb(C)$ is an embedding. By the universal property of $\Alb(Y)$, there is an induced diagram \[ \xymatrix{&\Alb(Y)\ar[d]\\
Y\ar[ur]^{\alpha_Y}\ar[r]&A\times \Alb(C)}\]
The vertical map induces an isomorphism on $H^1$ with $\Q$-coefficients, hence must be an isogeny.  On the other hand, since $Y\rightarrow A\times\Alb(C)$ is an immersion, so is $\alpha_Y$.
\end{proof}

\subsection{The case when $\dim_\C H^{1,0}(X_t)^\rho=2$}
We proved the dimension 1 case of Theorem \ref{BranchedCriterion} in Corollary \ref{ellipticCase}. The remaining case is when $\dim_\C H^{1,0}(X_t)^\rho=2$:

\begin{proof}By Theorem \ref{ReducedBranched}, we may assume that $X$ is reduced, and that $F':X\rightarrow A\times C$ is an immersion. Since $\dim_\C(A)=2$, $X$ has codimension 1 in $A\times C$.  Let $\nu$ be the normal bundle to $X$ in $A\times C$. The Chern character of the tangent bundle to $A$ is trivial, and the Chern character of the tangent bundle to $C$ is $1-K_C$, where $K_C$ is the canonical class of $C$. Pulling back to $X$ we obtain \[c(\nu\oplus TX)=(1+c_1(\nu))\cdot(1+c_1(X)+c_2(X))=1-f^*K_C\]
From this we get two equations:
\begin{equation}c_1(\nu)=-f^*K_C-c_1(X),
\end{equation}
\begin{equation}c_1(X)c_1(\nu)+c_2(X)=0,
\end{equation}
and substituting the former into the latter we obtain:
\begin{equation}c_2(X)-c_1(X)\cdot f^*K_C-c_1^2(X)=0.
\end{equation}
Now $f^*K_C$ is represented by the divisor $-\chi(C)X_t$ where $X_t$ is a fiber of $f$.  Since $X_t\cdot X_t=0$, by adjunction we have 
\begin{equation} -c_1(X)\cdot X_t=K_X\cdot X_t=-\chi(X_t)
\end{equation}
Equations (3) and (4) together with the fact that $c_2(X)=\chi(C)\chi(X_t)$ yield
\begin{equation}c_1^2(X)=2c_2(X)
\end{equation}
By the Hirzebruch signature theorem, Equation (5) implies that the signature of $X$ vanishes and hence that $X$ must be a product.  Since $F':X\rightarrow A\times C$ is an isomorphism on $H^1$ with $\Q$-coefficients, this means that $X\cong D\times C$ where $D$ is a genus 2 curve.  
\end{proof}

\section{Kodaira fibrations with a section}
The original Atiyah--Kodaira examples also possessed a holomorphic multisection. This property holds more generally for double \'etale Kodaira fibrations as in \cite{CaRo09}. 
\begin{definition} A Kodaira fibration $f:X\rightarrow C$ has a \textit{multisection} if there exists a finite (unramified) covering $p:\widetilde{C}\rightarrow C$ and a lift $s:\widetilde{C}\rightarrow X$ such that the following diagram commutes: 
\[\xymatrix{&X\ar[d]^f\\
\widetilde{C}\ar[r]^p\ar[ur]^s&C
}\]
\end{definition}
By the universal property of fiber products, $s:\widetilde{C}\rightarrow X$ is a multisection if and only if the pullback bundle $p^*(X)\rightarrow \widetilde{C}$ has a section.  Therefore, after passing to a cover of the base, we may assume the bundle has a section. Since $C$ has genus $\geq 2$, the existence of a smooth section is a group-theoretic condition: $f:X\rightarrow C$ has a section if and only if there is a homomorphism $\sigma:\pi_1(C)\rightarrow \pi_1(X)$ such that $\sigma\circ f_*=\id$. Although this seems like a strong condition, in fact having a multisection is not that restrictive from the point of view of monodromy.

\begin{proposition}\label{BranchedSection}For every Kodaira fibration $f:X\rightarrow C$ there exists a Kodaira fibration $\widetilde{f}:\widetilde{X}\rightarrow \widetilde{C}$ such that:\begin{enumerate}[(i)]
\item There are branched coverings $p:\widetilde{C}\rightarrow C$ and $P:\widetilde{X}\rightarrow X$ such that \[\xymatrix{
\widetilde{X}\ar[r]^P\ar[d]_{\widetilde{f}}&X\ar[d]^f\\
\widetilde{C}\ar[r]^p&C
}\]is a pullback square. In particular, the fibers of $f$ and $\widetilde{f}$ have the same genus $h$
\item The monodromy homomorphisms $\rho:\pi_1(C)\rightarrow \Mod(\Sigma_h)$ and $\widetilde{\rho}:\pi_1(\widetilde{C})\rightarrow \Mod(\Sigma_h)$ have the same image.  
\item $\widetilde{f}:\widetilde{X}\rightarrow \widetilde{C}$ has a holomorphic section.
\end{enumerate}Moreover, if we pass to a finite unramified cover of $X$, we can choose $p$ and $P$ to be regular coverings. 
\end{proposition}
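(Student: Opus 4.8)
The plan is to produce the section \emph{geometrically}, by realizing $\widetilde{C}$ as a genuine curve sitting inside $X$ and mapping onto $C$ via $f$. Once such a curve $\widetilde{C}\subset X$ is in hand, the branched cover $p$ is simply the restriction of $f$, the pullback $\widetilde{X}=X\times_C\widetilde{C}$ automatically fits into the required square, and the inclusion $\widetilde{C}\hookrightarrow X$ supplies a \emph{holomorphic} section by passing to graphs. The key point is that this produces a holomorphic section for free, which is the content at issue.

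Concretely, I would fix a projective embedding of $X$ (it is projective, being of general type) and take $\widetilde{C}=X\cap H$ to be a general hyperplane section. By Bertini together with the Lefschetz hyperplane theorem, $\widetilde{C}$ is a smooth connected curve and $\iota\colon\widetilde{C}\hookrightarrow X$ induces a surjection $\iota_*\colon\pi_1(\widetilde{C})\twoheadrightarrow\pi_1(X)$. Setting $p=f\circ\iota\colon\widetilde{C}\to C$, the curve $\widetilde{C}$ lies in no fiber of $f$, since its self-intersection is positive while a fiber has self-intersection zero; hence $p$ is nonconstant, and so a finite, surjective branched covering of curves. Forming $\widetilde{X}=X\times_C\widetilde{C}$ with its projections $\widetilde{f}\colon\widetilde{X}\to\widetilde{C}$ and $P\colon\widetilde{X}\to X$ gives the pullback square of (i); since $f$ is a holomorphic submersion, base change along $p$ leaves $\widetilde{X}$ smooth and $\widetilde{f}$ a submersion with fibers $\widetilde{f}^{-1}(c)\cong X_{p(c)}$ of genus $h$, and the moduli map of $\widetilde{f}$ is that of $f$ precomposed with the nonconstant $p$, so it is nonconstant and $\widetilde{f}$ is again a Kodaira fibration.

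Conditions (ii) and (iii) are then quick. For (iii), the map $s(c)=(\iota(c),c)$ is a well-defined holomorphic section of $\widetilde{f}$ because $f(\iota(c))=p(c)$. For (ii), the pullback has monodromy $\widetilde{\rho}=\rho\circ p_*$, and $p_*=f_*\circ\iota_*$ is surjective since $\iota_*$ is onto by Lefschetz and $f_*\colon\pi_1(X)\to\pi_1(C)$ is onto (connected fibers); hence $\im(\widetilde{\rho})=\im(\rho)$.

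The main obstacle is the final clause, where regularity of $p$ and $P$ must be reconciled with the surjectivity that gave (ii). Here I would replace $\widetilde{C}$ by the Galois closure $\widehat{C}\to C$ of $p$, with group $G$; it factors through $\widetilde{C}$, so the composite $\phi\colon\widehat{C}\to\widetilde{C}\hookrightarrow X$ is still a morphism over $C$, and the pullback $X\times_C\widehat{C}\to X$ of the Galois cover $\widehat{C}\to C$ is now regular with group $G$ and still carries a section coming from $\phi$. What can fail is precisely (ii): since $\widehat{C}\to\widetilde{C}$ is itself ramified, $\phi_*(\pi_1(\widehat{C}))$ is only a finite-index subgroup $K\leq\pi_1(X)$, so the monodromy image may shrink. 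To repair this I would pass to the finite unramified cover $X^\circ\to X$ corresponding to $K$ (indeed to a finite-index normal subgroup inside $K$, to keep every cover in sight regular): the morphism $\phi$ lifts to $\phi^\circ\colon\widehat{C}\to X^\circ$ with $\phi^\circ_*$ now onto $\pi_1(X^\circ)$, the fibration $f$ descends to a Kodaira fibration $f^\circ\colon X^\circ\to C^\circ$ over the corresponding cover of the base, and running the earlier argument over $X^\circ$ restores the equality of monodromy images while $\widehat{C}\to C^\circ$ and its pullback $P^\circ$ remain regular. Checking that $\widehat{C}\to C^\circ$ is still Galois (being intermediate in the Galois cover $\widehat{C}\to C$) and that smoothness and non-isotriviality survive all these base changes is the technical heart of the argument.
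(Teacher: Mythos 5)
Your proposal is correct and follows essentially the same route as the paper: take a generic linear (hyperplane) section $\widetilde{C}=X\cap H$, use Bertini and the Lefschetz hyperplane theorem to get a smooth connected curve with $\pi_1(\widetilde{C})\twoheadrightarrow\pi_1(X)$, set $p=f|_{\widetilde{C}}$, form the fiber product, and obtain the section from the universal property, with the monodromy statement following from surjectivity of $p_*$. Your treatment of the final regularity clause via the Galois closure is somewhat more detailed than the paper's one-line remark, but it is the same idea.
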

\begin{rmk}It is not difficult to find a surjection from $\pi(\Sigma_N)\rightarrow \pi_1(X)$ for some $N\gg g$, by say factoring through a free group.  However, this will never be homotopic to a holomorphic map. In particular, the euler class of the pullback bundle will vanish since it factors through a free group.
\end{rmk}
\begin{proof} Embed $X$ into some projective space $\PP^N$ for $N$ large.  By Bertini's theorem \cite{GH94}, for generic $H\cong \PP^2\subset\PP^N$, $H\cap X$ will be a smooth, connected curve $\widetilde{C}$. The projection $p=f|_{\widetilde{C}}:\widetilde{C}\rightarrow C$ is a finite degree branched covering.  Let $\widetilde{X}:=p^*(X)$ be the pullback.  All the fibers of $\widetilde{f}:\widetilde{X}\rightarrow \widetilde{C}$ have genus $h$, and we have maps $\id:\widetilde{C}\rightarrow \widetilde{C}$ and $j:\widetilde{C}\rightarrow X$ such that $p\circ\id=f\circ j$.  By the universal property of fiber products there is a holomorphic map $s:\widetilde{C}\rightarrow \widetilde{X}$
\[
\xymatrix{\widetilde{C}\ar@{.>}[dr]^{s}\ar@/^/[drr]^{j}\ar@/_/[ddr]_{\id}& & \\
& \widetilde{X}\ar[r]^{P}\ar[d]^{\widetilde{f}}&X\ar[d]^{f}\\
& \widetilde{C}\ar[r]^{p}& C
}\]
Thus, $\widetilde{f}\circ s=\id$ so $s$ is a holomorphic section. This proves (i) and (iii). 

Let $\rho:\pi_1(C)\rightarrow \Mod(\Sigma_h)$ and $\widetilde{\rho}:\pi_1(\widetilde{C}\rightarrow \Mod(\Sigma_h)$ be the monodromy homomorphisms.  By the Lefschetz hyperplane theorem, $j:\widetilde{C}=H\cap X\rightarrow X$ induces a surjection $j_*:\pi_1(\widetilde{C})\rightarrow \pi_1(X)$. In particular $\Pi_h\leq \im j_*$ and $\im\widetilde{\rho}\cong \pi_1(X)/\Pi_h=\im(\rho)$.  This proves (ii). For the last statement, given any branched covering of curves $C'\rightarrow C$ it is easy to pass to a further branched covering which is regular and whose image in $\pi_1$ is a finite index subgroup of $\pi_1(C)$.
\end{proof}

Proposition \ref{BranchedSection} clearly establishes Theorem \ref{main3}. Note that $P^*$ carries the holomorphic invariants of $f$ isomorphically onto the holomorphic invariants of $\widetilde{f}$.  As noted in Remark \ref{Isotropic}, $f:X\rightarrow C$ is branched if and only if $H^{1,0}(X_t)^\rho$ is 1-dimensional or contains an isotropic subspace of dimension $\geq 2$. The following corollary is therefore immediate:
\begin{corollary} $\widetilde{X}$ is branched if and only if $X$ is branched.
\end{corollary}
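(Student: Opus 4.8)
The plan is to deduce everything from the branchedness criterion recorded just above, together with the fact already noted that $P^*$ carries the holomorphic invariants of $f$ isomorphically onto those of $\widetilde f$. By that criterion, $f:X\to C$ is branched if and only if $H^{1,0}(X_t)^\rho$ is one-dimensional or contains an isotropic subspace of dimension $\geq 2$, and similarly for $\widetilde f$. Here, as in Remark \ref{Isotropic}, ``isotropic'' is understood via the identification $\iota^*:V^{1,0}\xrightarrow{\sim}H^{1,0}(X_t)^\rho$ with respect to the Castelnuovo--de Franchis wedge pairing $\bigwedge^2 H^{1,0}(X)\to H^{2,0}(X)$ on the \emph{surface}; note that the pairing on the fiber itself is useless, since the product of two holomorphic $1$-forms on a curve vanishes identically. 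Thus it suffices to check that the isomorphism $P^*$ between the two invariant spaces preserves this wedge structure.

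Two observations accomplish this. First, $P^*$ is a ring homomorphism on cohomology, so $P^*(\theta\wedge\theta')=P^*\theta\wedge P^*\theta'$ for holomorphic invariants $\theta,\theta'$ of $f$. Second, since $P:\widetilde X\to X$ is finite and surjective we have $P_*\circ P^*=\deg(P)\cdot\id$, so $P^*$ is injective on $H^*(X;\Q)$ and hence, preserving Hodge type, injective on $H^{2,0}(X)$. Combining the two, $\theta\wedge\theta'=0$ if and only if $P^*\theta\wedge P^*\theta'=0$. Therefore $P^*$ induces a dimension-preserving bijection between the isotropic subspaces of the holomorphic invariants of $f$ and those of $\widetilde f$, and in particular carries a one-dimensional invariant space to a one-dimensional one.

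With these compatibilities in hand the corollary is immediate: applying the criterion to both $f$ and $\widetilde f$ and transporting the relevant subspaces through $P^*$ and $(P^*)^{-1}$ shows that $\widetilde X$ is branched exactly when $X$ is. The only step requiring real care---and the one I would check most carefully---is the bookkeeping of which pairing the criterion uses: once one works consistently with the surface wedge on both $X$ and $\widetilde X$ rather than with the (vanishing) symplectic form on the fibers, the ring-homomorphism property and the injectivity of $P^*$ on $H^{2,0}$ make the transfer automatic.
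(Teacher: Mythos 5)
Your argument is correct and is essentially the paper's own: the corollary is deduced from the isotropy criterion of Remark \ref{Isotropic} together with the fact that $P^*$ identifies the holomorphic invariants of $f$ with those of $\widetilde f$, which the paper declares immediate. Your additional care in checking that $P^*$ respects the wedge pairing on $H^{1,0}$ of the total spaces (via the ring-homomorphism property and injectivity of $P^*$ on $H^{2,0}$, since $P_*P^*=\deg(P)\cdot\id$) just fills in the details the paper leaves to the reader.
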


\section{Concluding remarks} It is natural to wonder whether Theorem \ref{BranchedCriterion} can be extended to cases of higher dimensional invariants.  As far as the author is aware, all examples of Kodaira fibrations in the literature either come branch over a product of curves, or have trivial invariants in first cohomology. 

\begin{question}Does there exist a Kodaira fibration with nontrivial invariants that does not admit a branched covering over a product of curves?
\end{question}

If $\dim_\C H^{1,0}(X_t)^\rho\geq2$, it is not hard to see that a Kodaira fibration $f:X\rightarrow C$ admits a branched covering over a product of curves if and only if there exist $\beta_1,\beta_2\in H^{1,0}(X_t)^\rho$ such that $\beta_1\wedge \beta_2=0$.  That is, there is an isotropic subspace of the invariants of dimension at least 2.  Any two such isotropic subspaces are disjoint if they are maximal.  Hence repeating the construction of \S3, any $X$ maps onto a product of curves and a torus.   

In terms of cohomology, the map $\alpha:X\rightarrow A$ induces an isomorphism on $H^1$ and a surjection on $H^2$.  The extent to which $\alpha^*$ fails to be an injection in degree 2 determines the lower central series of $\pi_1(X)$.  Based on this observation, we are able to extend Corollary \ref{Nilpotent} to the case $\dim_\C H^{1,0}(X)^\rho=3$.

\begin{proposition} If $f:X\rightarrow C$ is a Kodaira fibration with $\dim_\C H^{1,0}(X)^\rho=3$, then $\pi_1(X)$ is not residually nilpotent.
\end{proposition}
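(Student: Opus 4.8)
The plan is to mimic the proof of Corollary \ref{Nilpotent}: construct a holomorphic map $\Psi\colon X\to P$ onto a residually nilpotent target inducing an isomorphism on $H^1(-;\Q)$ and an injection on $H^2(-;\Q)$, apply Stallings' theorem to obtain an injection $\pi_1(X)\hookrightarrow\pi_1(P)$ under the assumption of residual nilpotence, and derive a contradiction. The one structural difference from the cases $d\le 2$ is that $P$ will in general have complex dimension $>2$, so $\Psi$ is not finite and the ``finite degree $\Rightarrow$ finite index'' contradiction of Corollary \ref{Nilpotent} is unavailable; instead I would produce a nontrivial normal abelian subgroup of $\pi_1(X)$, which is impossible in a surface-by-surface group.

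First I would run the construction of \S\ref{MappingAlbanese}--\S\ref{MappingProduct} to obtain $\alpha\times f\colon X\to A\times C$, where $A$ is an abelian threefold with $H^1(A;\Q)\cong V$, inducing an isomorphism on $H^1(-;\Q)$. Using Poincar\'e reducibility I would decompose $A$ up to isogeny into simple factors. Since $\dim_\C V^{1,0}=3$, every element of $\Lambda^2 V^{1,0}$ is decomposable, so each isotropic plane in $V^{1,0}$ arises from a fibration of $X$ onto a curve (Remark \ref{Isotropic}); accordingly each simple factor is of exactly one of two types: either $X$ maps onto a curve in it, so the factor is the Jacobian of a genus $\ge 2$ curve $D_j$ over which $X$ fibers, or $X$ maps onto the factor with $2$-dimensional image. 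In the second case the restriction of $\alpha^*$ to that factor's $H^2$ is injective --- by the projection formula when $X$ dominates the factor, and by the weak Lefschetz theorem when the factor is all of $A$, since a proper subvariety generating a simple abelian variety is an ample divisor. Replacing each curve factor $J(D_j)$ by the curve $D_j$ and collecting the remaining factors (together with the elliptic ones) into an abelian variety $B$, I assemble
\[\Psi=(g_{D_1},\dots,g_{D_r},\beta,f)\colon X\longrightarrow D_1\times\cdots\times D_r\times B\times C=:P.\]
By construction $\Psi^*$ is an isomorphism on $H^1(-;\Q)$, and $\pi_1(P)$ is residually nilpotent, being a product of surface groups and free abelian groups.

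The crux is to show that $\Psi^*$ is injective on $H^2(-;\Q)$; this is exactly the assertion that $\ker(\cup\colon\Lambda^2 V\to H^2(X;\Q))$ is accounted for by the factor structure, and is the ``failure of $\alpha^*$ to be injective in degree $2$'' that governs the lower central series. The K\"unneth summands of $H^2(P;\Q)$ are: the fiber classes (one per factor, nonzero and independent); the terms $H^1(\text{factor})\otimes H^1(C)$, injective by Proposition \ref{LerayHirsch}; the diagonal terms, which are harmless because on a curve $D_j$ only $H^2(D_j)=\Q$ is seen --- this is precisely where passing to the curve rather than its Jacobian removes the kernel $\Lambda^2 g_{D_j}^*H^{1,0}(D_j)$ --- while on $B$ the pullback is injective by the previous paragraph; and the cross terms $H^1(\text{factor}_i)\otimes H^1(\text{factor}_{i'})$ between distinct summands. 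Injectivity of the cross terms is what I expect to be the main obstacle. I would deduce it from the fact that distinct simple factors of $A$ are orthogonal for the induced polarization, which controls the holomorphic part $\theta\wedge\theta'$; the delicate point is to rule out mixed relations $\sum_i\theta_i\wedge\overline{\eta_i}=0$ in $H^{1,1}(X)$ with the $\theta_i,\eta_i$ invariant and belonging to different factors, for which I would use the Hodge--Riemann bilinear relations restricted to the invariant sub-Hodge structure $V$.

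Granting injectivity on $H^2$, Stallings' theorem yields $\pi_1(X)\hookrightarrow\pi_1(P)$ if $\pi_1(X)$ is residually nilpotent. If $V^{1,0}$ is isotropic then $P=D\times C$ with $\dim_\C P=2$ and $\Psi$ is finite, so Corollary \ref{Nilpotent} applies verbatim. Otherwise $B$ is nontrivial and I set $M=\bigcap_j\ker(g_{D_j})_*\cap\ker f_*$, the preimage of $\pi_1(B)$. Then $M$ is normal in $\pi_1(X)$ and injects into the abelian group $\pi_1(B)$; moreover $M$ is nontrivial, since the classes of $H^1(B)\subset V$ are not pulled back from the curve factors or from $C$ and therefore restrict nontrivially to $M$. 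But a surface-by-surface group $1\to\Pi_h\to\pi_1(X)\to\Pi_{g(C)}\to1$ with $g(C),h\ge2$ has no nontrivial normal abelian subgroup: such a subgroup would meet $\Pi_h$ and map into $\Pi_{g(C)}$ in normal abelian subgroups, both necessarily trivial. This contradiction completes the proof. The two places that demand genuine work are the injectivity on $H^2$ (the cross terms and the $(1,1)$-type relations) and the verification that $M\ne1$.
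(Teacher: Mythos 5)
Your strategy is essentially the paper's: exploit $\dim_\C V^{1,0}=3$ to split off curve factors corresponding to isotropic planes in the invariants and an abelian factor for the rest (yielding the same trichotomy: a genus-$3$ curve, a genus-$2$ curve times an elliptic curve, or a $3$-torus), then run Stallings. Your endgame via a normal abelian subgroup is in fact a reasonable repair of the paper's terse appeal to ``the same argument as Corollary~\ref{Nilpotent}'', which literally applies only when the target is a surface and the map is finite. But two steps have genuine gaps. The first is the injectivity of $\Psi^*$ on $H^2$, which you explicitly leave open. Note that the tools to close most of it are already in your setup: since every element of $\Lambda^2V^{1,0}$ is decomposable in dimension $3$, injectivity on the $(2,0)$-part reduces to showing $\theta\wedge\theta'\neq 0$ for invariant forms not spanning an isotropic plane, and any isotropic plane lies in one of the pairwise disjoint maximal isotropic subspaces $g_{D_j}^*H^{1,0}(D_j)$, whose wedge squares you have already killed by replacing $J(D_j)$ with $D_j$; the terms against $H^1(C)$ are Proposition~\ref{LerayHirsch}. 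What genuinely remains is ruling out a kernel of pure type $(1,1)$ --- a point the paper itself treats briskly --- and you identify it without resolving it, so the proposal is incomplete by its own admission.

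The second gap is your claim that $M\neq 1$. The justification is not valid: a class $\xi\in H^1(X;\Q)$ vanishes on $M$ exactly when it factors through $\pi_1(X)/M$, and $\mathrm{Hom}(\pi_1(X)/M,\Q)$ need not be spanned by pullbacks from the $D_j$ and $C$, since homomorphisms from a subgroup of a product need not extend to the product; in particular, if $M=1$ then every class vanishes on $M$ and your criterion detects nothing. Concretely, $M\subseteq\ker f_*=\Pi_h$, so in the case $r=1$ one has $M=\ker\bigl(\Pi_h\to\pi_1(D_1)\bigr)$ induced by the fiberwise branched covering $X_t\to D_1$, and this kernel is trivial precisely when that covering is unramified, which Riemann--Hurwitz does not forbid on numerical grounds. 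The correct argument must invoke the Kodaira hypothesis: if $X_t\to D_1$ were unramified for $t$ in a nonempty open set, then $X_t$ would be a cover of the fixed curve $D_1$ determined by a locally constant finite-index subgroup, so its moduli would be locally constant, contradicting nonconstant moduli; hence the ramification divisor of $g_{D_1}$ surjects onto $C$ and $\Pi_h\to\pi_1(D_1)$ has nontrivial kernel. With that fix (the case $r=0$, where $M=\Pi_h$ is nonabelian and cannot embed in $\Z^6$, is immediate), your contradiction goes through.
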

\begin{proof} Based on Theorem \ref{BranchedCriterion}, there is a map $F:X\rightarrow C\times Y$ where $Y$ is either
\begin{enumerate}[(i)]
\item A curve of genus 3.
\item A product of a curve of genus 2 and an elliptic curve. 
\item A 3-dimensional torus.
\end{enumerate}
In the first two cases, the map induces an injection on $H^2$, and thus, by the same argument in Corollary \ref{Nilpotent}, $\pi_1(X)$ is not residually nilpotent.  In the last case, any two linearly independent elements of $H^{1,0}(X)^\rho$ have nontrivial cup product.  Since every element of $H^{2,0}(Y)$ is decomposable, the map $H^2(Y;\Q)\rightarrow H^2(X;\Q)$ is an injection.  Hence in this case, $\pi_1(X)$ has the same rational lower central series as $\pi_1(C)\times \pi_1(Y)\cong \Pi_g\times \Z^6$, and is likewise not residually nilpotent. 
\end{proof}

This is some indication that $\pi_1(X)$ may not be residually nilpotent for any Kodaira fibration $X$. However, the lower central series of $\pi_1(X)$ may not always be the same as that of a product of curves and a torus.  If a vector space $V$ has dimension at least 4, there are elements of $\wedge^2(V)$ which are not decomposable, so the above argument does not work.  

\begin{question}Is $\pi_1(X)$ ever residually nilpotent, for $X$ a Kodaira fibration? More generally, what are the possible 1-rational homotopy types of $X$? \end{question}

\bibliography{KahlerbibSurf}
\bibliographystyle{plain}

\end{document}